
\documentclass[11pt]{amsart}%
\usepackage{amsmath}
\usepackage{amsfonts}
\usepackage{geometry}
\usepackage{graphicx}
\usepackage{amssymb}
\usepackage{epstopdf}%
\setcounter{MaxMatrixCols}{30}
\providecommand{\U}[1]{\protect\rule{.1in}{.1in}}
\geometry{letterpaper}
\newtheorem{theorem}{Theorem}[section]

\newtheorem{lemma}[theorem]{Lemma}
\newtheorem{proposition}[theorem]{Proposition}
\newtheorem{definition}[theorem]{Definition}

\newtheorem{remark}[theorem]{Remark}
\numberwithin{equation}{section}
\begin{document}
\title{\textbf{On the eigenvalue process of a matrix} \textbf{fractional Brownian
motion} }
\author{David Nualart}
\address{Department of Mathematics\\
University of Kansas \\
405 Snow Hall, Lawrence\\
Kansas 66045-2142, USA\\
nualart@math.ku.edu}
\author{Victor P\'{e}rez-Abreu}
\address{Department of Probability and Statistics\\
Center for Research in Mathematics CIMAT\\
Apdo. Postal 402, Guanajuato, Gto. 36000, Mexico\\
pabreu@cimat.mx}
\date{}
\maketitle

\begin{abstract}
We investigate the process of eigenvalues of a symmetric matrix-valued process
which upper diagonal entries are independent one-dimensional H\"{o}lder
continuous Gaussian processes of order $\gamma\in(1/2,1)$. Using the
stochastic calculus with respect to the Young's integral we show that these
eigenvalues do not collide at any time with probability one. When the matrix
process has entries that are fractional Brownian motions with Hurst parameter
$H\in(1/2,1)$,\ we find a stochastic differential equation in a Malliavin
calculus sense for the eigenvalues of the corresponding matrix fractional
Brownian motion. A new generalized version of the It\^{o} formula for the
multidimensional fractional Brownian motion is first established.\medskip

\textit{Keywords and phrases:} Young integral, noncolliding process, Dyson
process, H\"{o}lder continuous Gaussian process.

2000 \textit{Mathematics Subject Classification:} \emph{Primary:} 60H05,
60H07. \emph{Secondary:} 47A45

\end{abstract}

\footnote{David Nualart was partially supported by the NSF grant DMS1208625.}

\section{Introduction}

In a pioneering work in 1962, the nuclear physicist Freeman Dyson \cite{Dy62}
studied the stochastic process of eigenvalues of an Hermitian matrix Brownian
motion. The case of the (real) symmetric matrix Brownian motion was first
considered by Mc Kean \cite{Mc69} in 1969. In both cases the corresponding
processes of eigenvalues are called Dyson Brownian motion and are governed by
a noncolliding system of It\^{o} Stochastic Differential Equations (SDEs) with
non-smooth diffusion coefficients; see the modern treatments, for example, in
the books by Anderson, Guionnet and Zeitouni \cite{AGZ09} and Tao \cite{Ta12}.

\medskip More specifically, for the symmetric case, let $\left\{
B(t)\right\}  _{t\geq0}=\{(b_{jk}(t))\}_{t\geq0}$ be a $d\times d$ symmetric
matrix Brownian motion. That is, $(b_{jj}(t))_{j=1}^{d},(b_{jk}(t))_{j<k},$ is
a set of $d(d+1)/2$ independent one-dimensional Brownian motions with
parameter $(1+\delta_{jk})t.$ For each $t>0$, $B(t)$ is a Gaussian Orthogonal
(GO) random matrix with parameter $t$ (\cite{AGZ09}, \cite{Me04}). Let
$\lambda_{1}(t)\geq\lambda_{2}(t)\geq\cdots\geq\lambda_{d}(t),$ $t\geq0,$ be
the $d$-dimensional stochastic process of eigenvalues of $B$. If the
eigenvalues start at different positions $\lambda_{{\small 1}}(0)>\lambda
_{{\small 2}}(0)>\cdots>\lambda_{{\small d}}(0)$, then they never meet at any
time ($\lambda_{1}(t)>\lambda_{2}(t)>\cdots>\lambda_{d}(t)$ almost surely
$\forall t>0$) and furthermore they form a diffusion process satisfying the
It\^{o} SDE
\begin{equation}
\lambda_{i}(t)=\sqrt{2}W_{t}^{i}+\sum_{j\neq i}\int_{0}^{t}\frac{\mathrm{d}%
s}{\lambda_{i}(s)-\lambda_{j}(s)},\quad1\leq i\leq d,\quad t\geq0, \label{21}%
\end{equation}
where $W_{t}^{1},\dots,W_{t}^{d}$ are independent one-dimensional standard
Brownian motions.

\medskip Thus, Dyson Brownian motion can be thought as a model for the
evolution of $d $ Brownian motions $\lambda_{1}(t),\dots,\lambda_{d}(t)$ that
are restricted to never intersect and having a repulsion force which is
inversely proportional to the distance between any two eigenvalues. For that
reason $\lambda(t)=(\lambda_{1}(t),\dots, \lambda_{d}(t))$ is also called a
Dyson non-colliding Brownian motion.

\medskip Different aspects of the Dyson Brownian motion have been considered
by several authors \cite{Be08}, \cite{CL97}, \cite{Ch92}, \cite{Ga99},
\cite{Gu08}, \cite{KT13}, \cite{Is01}, \cite{PaT07}, \cite{RS93}. The
corresponding eigenvalues processes of other matrix stochastic processes and
their associated It\^{o} SDEs have been studied in \cite{Br89}, \cite{Br91},
\cite{De07}, \cite{De07b}, \cite{KT04}, \cite{KO01}, \cite{PaT09}, among others.

\medskip In the literature, sometimes a Dyson process means a $d\times d$
matrix process in which the entries undergo diffusion, see \cite{TW04}. The
evolution of the singular values of a matrix process is often called Laguerre
process, being an important example the Wishart case (\cite{Br89},
\cite{Br91}, \cite{De07}, \cite{KO01}, \cite{PaT09}).

\medskip The purpose of this paper is to study the eingenvalue process of a
symmetric matrix Gaussian process $\left\{  G(t)\right\}  _{t\geq0}$ which
entries in the upper diagonal part are independent one-dimensional zero mean
Gaussian processes with H\"{o}lder continuous paths of order $\gamma
\in(1/2,1).$ In this case $G(t)$ is still a $GO$ random matrix of parameter
$t$, for each $t>0,$ and therefore the eigenvalues of $G(t)$ are distinct for
each $t>0$ a.s. However $\left\{  G(t)\right\}  _{t\geq0}$ is not a matrix
process with independent increments nor a matrix diffusion. We prove that the
corresponding eigenvalues do not collide at any time with probability one. The
proof of this fact is based on the stochastic calculus with respect to the
Young's integral. Our noncolliding method of proof is different from the case
of the Brownian matrix and other matrix diffusions, for which one first has to
find a diffusion process governing the eigenvalue process as done, for
example, in \cite{AGZ09} for the Hermitian Brownian motion. Our result does
not include the latter as a special case.

We study in detail the case when the matrix process has entries that are
fractional Brownian motions with Hurst parameter $H\in(1/2,1)$. For this
matrix process we find a stochastic differential equation for the eigenvalue
process, similar to equation (\ref{21}), where instead of $\sqrt{2}W_{t}^{i}$
we obtain processes $Y_{t}^{i}$ which are expressed as Skorohod indefinite
integrals. This equation is derived applying a generalized version of the
It\^{o} formula in the Skorohod sense for the multidimensional fractional
Brownian motion, which has its own interest. Each process $Y^{i}$ has the same
$H$-self-similarity and $1/H$-variation properties as a one-dimensional
fractional Brownian motion, although there is no reason for them to be
fractional Brownian motions. This phenomenon is related to the representation
of fractional Bessel processes, established by Guerra and Nualart in
\cite{GN05}. In the case of the fractional Bessel process, the fact that the
indefinite Skorohod integral appearing in the representation is not a
one-dimensional standard Brownian motion was proved by Hu and Nualart in
\cite{HN05}. In our case we conjecture that each $Y^{i}$ is not a
one-dimensional fractional Brownian motion, but at this moment we are not able
to give a proof of this fact.

The paper is organized as follows. Section 2 contains preliminaries on
Malliavin calculus for the fractional Brownian motion. Section 3 establishes a
generalized version of the It\^{o} formula for the multidimensional fractional
Brownian motion, in the case of functions that are smooth only on a dense
subset of the Euclidean space, as needed to describe the evolution of the
eigenvalues, a result does not covered in the literature. It also recalls a
property on the $1/H$-variation of a divergence integral and establishes a
result that allows to compute the $1/H$-variation of a Skorohod integral wrt a
multidimensional Brownian motion. In Section 4 we prove that if the matrix
process $\left\{  G(t)\right\}  _{t\geq0}$ has entries that are H\"{o}lder
continuous Gaussian processes of order $\gamma\in(1/2,1)$, then the
corresponding eigenvalues processes do not collide at any time with
probability one. Finally, Section 4 considers the SDE of the eigenvalues of a
matrix fractional Brownian notion similar to (\ref{21}) but in the Malliavin
calculus sense.

\section{Malliavin Calculus for the fBm}

In this section we present some basic facts on the Malliavin calculus, or
stochastic calculus of variations, with respect to the fractional Brownian
motion. We refer the reader to \cite{Nu06} for a detailed account of this topic.

Suppose that $B=\{B_{t}, t\ge0\}$ is a fractional Brownian motion with Hurst
parameter $H\in(1/2,1)$. That is, $B$ is a zero mean Gaussian process with
covariance
\[
R(t,s)= \mathbb{E}(B_{t}B_{s}) =\frac12 \left(  t^{2H} + s^{2H} - |t-s|^{2H}
\right)  .
\]
The process $B$ is $H$-self-similar, that is, for any $a>0$, $\{B_{at},
t\ge0\}$ and $\{a^{H} B_{t}, t\ge0\}$ have the same law. On the other hand, it
possesses a finite $1/H$-variation on any time interval of length $t$ (see
Definition \ref{def1}) equals to $t \mathbb{E} (|Z|^{1/H})$, where $Z$ is a
$N(0,1)$ random variable.

\medskip Fix a time interval $[0,T]$ and let $\mathcal{H}$ be the Hilbert
space defined as the closure of the set of step functions with respect to the
scalar product
\[
\langle\mathbf{1}_{[0,t]} ,\mathbf{1}_{[0,s]} \rangle_{\mathcal{H}} =R(s,t).
\]
Given two step functions $\varphi, \psi$ on $[0,T]$, its inner product can be
expressed as
\[
\langle\varphi, \psi\rangle_{\mathcal{H}}= \alpha_{H} \int_{0}^{T} \int%
_{0}^{T} \varphi_{t} \psi_{s} |t-s|^{2H-2} dtds,
\]
where $\alpha_{H} =H(2H-1)$. We denote by $|\mathcal{H}|$ the space of
measurable functions $\varphi$ on $[0,T]$ such that
\[
\| \varphi\|^{2}_{|\mathcal{H}|} := \alpha_{H}\int_{0}^{T} \int_{0}^{T}
|\varphi_{t}| |\varphi_{s} | |t-s|^{2H-2} dtds<\infty.
\]
This space is a Banach space, which is isometric to a subspace of
$\mathcal{H}$ and it will be identified with this subspace. Moreover, we have
the following continuous embeddings (see \cite{MeMiVa01}).
\begin{equation}
\label{tt2}L^{1/H}([0,T]) \subset|\mathcal{H}| \subset\mathcal{H}.
\end{equation}
The mapping $\mathbf{1}_{[0,t]} \mapsto B_{t}$ can be extended to a linear
isometry between $\mathcal{H}$ and the Gaussian space generated by $B$. We
denote this isometry by $h\mapsto B(h)$.

Let $\mathcal{S}$ be the set of smooth and cylindrical random variables of the
form
\[
F=f\left(  B(h _{1}),\ldots,B(h _{n})\right)  \text{,}%
\]
where $n\geq1$, $f\in C_{b}^{\infty}\left(  \mathbb{R}^{n}\right)  $ ($f$ and
all its partial derivatives are bounded) and $h _{i}\in\mathcal{H}$. The
derivative operator is defined in $\mathcal{S}$ as the $\mathcal{H}$-valued
random variable
\[
DF:=\sum_{i=1}^{n}\frac{\partial f}{\partial x_{i}}\left(  B(h _{1}%
),\ldots,B(h_{n})\right)  h _{i}.
\]
The derivative operator is a closable operator from $L^{p}\left(
\Omega\right)  $ into $L^{p}\left(  \Omega;\mathcal{H}\right)  $ for any
$p\geq1$. We denote by $D^{k}$ the iteration of $D$. For any $p\geq1$, we
define the Sobolev space $\mathbb{D}^{k,p}$ as the closure of $\mathcal{S}$
with respect to the norm
\[
\left\|  F\right\|  _{k,p}^{p}:=\mathbb{E}\left[  \left|  F\right|
^{p}\right]  +\mathbb{E}\left[  \sum_{j=1}^{k}\left\|  D^{j}F\right\|
_{\mathcal{H}^{\otimes j}}^{p}\right]  .
\]
In a similar way, given a Hilbert space $V$, we can define the Sobolev space
of $V$-valued random variables, denoted by $\mathbb{D}^{k,p}(V)$.

The divergence operator $\delta$ is the adjoint of the derivative operator,
defined by means of the duality relationship
\begin{equation}
\label{dua}\mathbb{E}\left[  F\delta(u)\right]  =\mathbb{E}\left[
\left\langle DF,u\right\rangle _{\mathcal{H}}\right]  ,
\end{equation}
where $u$ is a random variable in $L^{2}\left(  \Omega;\mathcal{H}\right)  $.
The domain $\mathrm{Dom}\left(  \delta\right)  $ is the set of random
variables $u\in L^{2}\left(  \Omega;\mathcal{H}\right)  $ such that
\[
\left|  \mathbb{E} \left(  \left\langle DF,u\right\rangle _{\mathcal{H}}
\right)  \right|  \leq c\left\|  F\right\|  _{2}%
\]
for all $F\in\mathbb{D}^{1,2}$. We have the inclusion $\mathbb{D}%
^{1,2}(\mathcal{H})\subset$\textrm{Dom}$\left(  \delta\right)  $, and for any
$u\in\mathbb{D}^{1,2}(\mathcal{H})$, the variance of the divergence of $u$ can
be computed as follows
\[
\mathbb{E}\left[  \delta(u)^{2}\right]  = \mathbb{E}\left[  \left\|
u\right\|  _{\mathcal{H}}^{2}\right]  +\mathbb{E}\left[  \left\langle
Du,\left(  Du\right)  ^{\ast}\right\rangle _{\mathcal{H\otimes H}}\right]
\leq\left\|  u\right\|  _{\mathbb{D}^{1,2}(\mathcal{H})}^{2},
\]
where $\left(  Du\right)  ^{\ast}$ is the adjoint of $Du$ in the Hilbert space
$\mathcal{H\otimes H}.$

By Meyer's inequalities, for all $p>1$, the divergence operator can be
extended to the space $\mathbb{D}^{1,p}(\mathcal{H})$ and the mapping
\[
\delta:\mathbb{D}^{1,p}(\mathcal{H})\rightarrow L^{p}\left(  \Omega\right)
\]
is a continuous operator, that is,
\begin{equation}
\label{tt3}\left\|  \delta(u)\right\|  _{p}\leq C_{p}\left\|  u\right\|
_{\mathbb{D}^{1,p}(\mathcal{H)}}.
\end{equation}

Denote by $\left|  \mathcal{H}\right|  \mathcal{\otimes}\left|  \mathcal{H}%
\right|  $ the space of measurable functions $\varphi$ defined on $\left[
0,T\right]  ^{2}$ such that
\[
\left\|  \varphi\right\|  _{\left|  \mathcal{H}\right|  \mathcal{\otimes
}\left|  \mathcal{H}\right|  }^{2}:=\alpha_{H}^{2}\int_{\left[  0,T\right]
^{4}}\left|  \varphi_{r,\theta}\right|  \left|  \varphi_{u,\eta}\right|
\left|  r-u\right|  ^{2H-2}\left|  \theta-\eta\right|  ^{2H-2}\mathrm{d}%
r\mathrm{d}u\mathrm{d}\theta\mathrm{d}\eta<\infty.
\]
As in the case of functions of one variable, this space is a Banach space,
which is isometric to a subspace of $\mathcal{H\otimes H}$ and it will be
identified with this subspace. For any $p\geq1$, we denote by $\mathbb{D}%
^{1,p}(\left|  \mathcal{H}\right|  ) $ the subspace of the Sobolev space
$\mathbb{D}^{1,p}(\mathcal{H})$ whose elements $u$ are such that $u\in\left|
\mathcal{H}\right|  $ a.s., $Du\in\left|  \mathcal{H}\right|  \mathcal{\otimes
}\left|  \mathcal{H}\right|  $ a.s. and
\[
\mathbb{E}\left[  \left\|  u\right\|  _{\left|  \mathcal{H}\right|  }%
^{p}\right]  +\mathbb{E}\left[  \left\|  Du\right\|  _{\left|  \mathcal{H}%
\right|  \mathcal{\otimes}\left|  \mathcal{H}\right|  }^{p}\right]
<\infty\text{.}%
\]

Then, using the embedding (\ref{tt2}) one can show that for any $p>1$, we have
the continuous embedding $\mathbb{L}_{H}^{1,p} \subset\mathbb{D}^{1,p}(\left|
\mathcal{H}\right|  ) $, where $\mathbb{L}_{H}^{1,p}$ is the set of processes
$u\in\mathbb{D}^{1,p}(\left|  \mathcal{H}\right|  \mathcal{)} $ such that
\begin{equation}
\left\|  u\right\|  _{\mathbb{L}_{H}^{1,p}}^{p}:=\mathbb{E}\left[  \left\|
u\right\|  _{L^{1/H}\left(  \left[  0,T\right]  \right)  }^{p}\right]
+\mathbb{E}\left[  \left\|  Du\right\|  _{L^{1/H}\left(  \left[  0,T\right]
^{2}\right)  }^{p}\right]  <\infty\text{.} \label{Lnorm}%
\end{equation}
As a consequence, Meyer inequalities (\ref{tt3}) imply
\begin{equation}
\left\|  \delta(u)\right\|  _{p}\leq C^{\prime}_{p}\left\|  u\right\|
_{\mathbb{L}_{H}^{1,p}} \label{estimatedelta}%
\end{equation}
if $u\in\mathbb{L}_{H}^{1,p}$ and $p>1$.

\medskip Let $N\in\mathbb{N}$, $d\geq2$, $H\in\left(  1/2,1\right)  \, $\ and
consider the $N$-dimensional fractional Brownian motion
\[
B=\{B_{t} , t\ge0\} =\left\{  B_{t}^{(1)},\ldots,B_{t}^{(N)}, t\in
[0,T]\right\}  ,
\]
with Hurst parameter $H,$ defined on the probability space $\left(
\Omega,\mathcal{F},P\right)  $, where $\mathcal{F}$ is generated by $B$. That
is, the components $B^{\left(  i\right)  },$ $i=1,\ldots d,$ are independent
fractional Brownian motions with Hurst parameter $H$. We can define the
derivative and divergence operators, $D^{\left(  i\right)  }$ and
$\delta^{\left(  i\right)  }$, with respect to each component $B^{\left(
i\right)  }$, as before. Denote by $\mathbb{D}_{i}^{1,p}\left(  \mathcal{H}%
\right)  $ the associated Sobolev spaces. We assume that these spaces include
functionals of all the components of $B$ and not only of component $i$.
Similarly, we introduce the spaces $\mathbb{L}_{H,i}^{1,p}$. That is,
$\mathbb{L}_{H,i}^{1,p}$ is the set of processes $u\in\mathbb{D}_{i}%
^{1,p}(\left|  \mathcal{H}\right|  \mathcal{)}$ such that
\[
\left\|  u\right\|  _{\mathbb{L}_{H,i}^{1,p}}^{p}:=\mathbb{E}\left[  \left\|
u\right\|  _{L^{1/H}\left(  \left[  0,T\right]  \right)  }^{p}\right]
+\mathbb{E}\left[  \left\|  D^{(i)}u\right\|  _{L^{1/H}\left(  \left[
0,T\right]  ^{2}\right)  }^{p}\right]  <\infty\text{.}%
\]
The processes $u\in\mathbb{L}_{H,i}^{1,p}$ are, in general, functionals of all
the components of the process $B$.

If a process $u=\{ u_{t}, t\in[0,T]\}$ belongs to the domain of $\delta^{(i)}%
$, we call $\delta^{(i)} (u)$ the Skorohod integral of $u$ with respect to the
fractional Brownian motion $B^{(i)}$ and we will make use of the notation
\[
\delta^{(i)} (u)= \int_{0}^{T} u_{s}\delta B^{(i)}_{s}.
\]

\section{Stochastic calculus for the fBm}

There exist a huge literature on the stochastic calculus for the fBm. There
are essentially two types of stochastic integrals: path-wise integrals defined
using the Young's integral in the case $H>1/2$, and the Skorohod integral
which is the adjoint of the divergence operator introduced in the previous
section. We refer the reader to the monographs by Biagini, Hu, \O ksendal and
Zhang \cite{BiHuOkZh08} and Mishura \cite{Mi08} and the references therein.

In order to describe the evolution of the eigenvalues of a matrix fractional
Brownian motion we need a multidimensional version of the It\^o formula for
the Skorohod integral, in the case of functions that are smooth only on a
dense open subset of the Euclidean space and satisfy some growth requirements.
This type of formula is not covered by the existing literature on the subject
and we provide below a proof based on a duality argument, following the
approach developed in the paper \cite{CheNu05}.

\begin{theorem}
\label{Th41}Suppose that $B$ is an $N$-dimensional fractional Brownian motion
with Hurst parameter $H>1/2$. Consider a function $F:\mathbb{R}^{N}%
\rightarrow\mathbb{R}$ such that:

\begin{enumerate}
\item There exists an open set $G\subset\mathbb{R}^{N}$ such that $G^{c}$ has
zero Lebesgue measure and $F$ is twice continuously differentiable in $G$.

\item $|F(x)| + \left|  \frac{\partial F}{\partial x_{i}}(x) \right|  \le
C(1+|x|^{M})$, for some constants $C>0$ and $M>0$ and for all $x\in G$ and
$i=1,\dots, N$.

\item For each $i=1,\dots, N$ and for each $s>0$ and $p\ge1$,
\[
\mathbb{E} \left[  \left|  \frac{\partial^{2} F}{\partial x_{i}^{2}} (B_{s})
\right|  ^{p} \right]  \le Cs^{-pH},
\]
for some constant $C>0$.
\end{enumerate}

Then, for each $i=1,\dots,N$ and $t\in[0,T]$, the process $\{ \frac{\partial
F}{\partial x_{i}}(B_{s})\mathbf{1}_{[0,t]}(s), s\in[0,T] \}$ belongs to the
space $\mathbb{L}_{H,i}^{1,1/H}$ and
\begin{equation}
\label{eq1}F(B_{t})= F(0)+ \sum_{i=1}^{N} \int_{0}^{t} \frac{\partial
F}{\partial x_{i}} (B_{s}) \delta B^{i}_{s}+ H \sum_{i=1}^{N} \int_{0}^{t}
\ \frac{\partial^{2} F}{\partial x_{i}^{2}} (B_{s}) s^{2H-1} ds.
\end{equation}

\end{theorem}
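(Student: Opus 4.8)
The plan is to prove this generalized Itô formula by a regularization-and-duality argument, following the strategy of \cite{CheNu05}. The key difficulty is that $F$ is only $C^2$ on the open set $G$ whose complement is Lebesgue-null, so we cannot directly invoke the standard Itô formula for smooth functions; instead I would first establish the formula for genuinely smooth approximations and then pass to the limit using the integrability hypotheses (2) and (3).

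First I would verify that the integrand $\{\frac{\partial F}{\partial x_i}(B_s)\mathbf{1}_{[0,t]}(s)\}$ belongs to $\mathbb{L}_{H,i}^{1,1/H}$, so that the Skorohod integral in \eqref{eq1} is well defined via \eqref{estimatedelta}. By the polynomial growth assumption (2) and the Gaussian integrability of $B_s$, the $L^{1/H}$-norm in time of $\frac{\partial F}{\partial x_i}(B_s)$ has finite $p$-th moment; for the Malliavin derivative one has $D_r^{(i)}\frac{\partial F}{\partial x_i}(B_s)=\frac{\partial^2 F}{\partial x_i^2}(B_s)\mathbf{1}_{[0,s]}(r)$ on $G$, and hypothesis (3) furnishes exactly the bound $\mathbb{E}[|\frac{\partial^2 F}{\partial x_i^2}(B_s)|^{1/H}]\le Cs^{-1}$ needed to control the $L^{1/H}([0,T]^2)$-norm of the derivative after integrating in $r$ and $s$ (the singularity $s^{-1}$ being integrable enough against the time variable in the required norm). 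The role of assumption (3) is precisely to tame the behavior of the second derivative near the bad set $G^c$.

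The core of the proof is the duality argument. For a test random variable $F_0=f(B(h_1),\dots,B(h_n))\in\mathcal{S}$, I would compute $\mathbb{E}[F_0\, \delta^{(i)}(\frac{\partial F}{\partial x_i}(B_\cdot)\mathbf{1}_{[0,t]})]$ using \eqref{dua}, reducing the identity \eqref{eq1} to an equality of expectations that no longer involves the divergence operator directly. To justify the computation I would introduce a smooth mollification $F_\varepsilon=F*\phi_\varepsilon$, apply the classical multidimensional Itô formula for the fBm (valid for $C^2$ functions of polynomial growth, as in the $H>1/2$ Skorohod theory) to $F_\varepsilon$, and then let $\varepsilon\to0$. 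On the set $G$ the derivatives $\frac{\partial F_\varepsilon}{\partial x_i}$ and $\frac{\partial^2 F_\varepsilon}{\partial x_i^2}$ converge pointwise to those of $F$; since $G^c$ is Lebesgue-null and $B_s$ has an absolutely continuous law, $B_s\in G$ almost surely for each fixed $s$, so the convergence holds $P$-a.s. for each $s$.

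The main obstacle will be controlling the passage to the limit in the two integral terms uniformly in $\varepsilon$, since the mollified second derivatives need not inherit the sharp bound $Cs^{-pH}$ of hypothesis (3). To handle this I would combine dominated convergence (using the polynomial growth bound (2), which the mollifications preserve up to a uniform constant) for the first-order Skorohod term — passing the limit through the continuity estimate \eqref{estimatedelta} for $\delta^{(i)}$ on $\mathbb{L}_{H,i}^{1,1/H}$ — with a careful analysis of the trace term $H\int_0^t \frac{\partial^2 F_\varepsilon}{\partial x_i^2}(B_s)s^{2H-1}ds$. For the latter the factor $s^{2H-1}$ together with bound (3) keeps the integrand integrable near $s=0$ after taking the limit, so that $\int_0^t \mathbb{E}[|\frac{\partial^2 F}{\partial x_i^2}(B_s)|]s^{2H-1}ds\le C\int_0^t s^{-H}s^{2H-1}ds=C\int_0^t s^{H-1}ds<\infty$. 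Verifying that the mollified trace terms converge to this limit — rather than concentrating mass on $G^c$ — is the delicate point, and is where the interplay between the null set $G^c$, the absolute continuity of the law of $B_s$, and the a priori bound (3) must be used most carefully.
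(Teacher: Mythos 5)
Your opening step---checking that $u_i(s)=\frac{\partial F}{\partial x_i}(B_s)\mathbf{1}_{[0,t]}(s)$ lies in $\mathbb{L}^{1,1/H}_{H,i}$ by combining condition (2) for the $L^{1/H}$-norm in time and condition (3) (with $p=1/H$, so that the factor $s$ from integrating the Malliavin derivative over $[0,s]$ cancels the singularity $s^{-1}$)---is exactly the paper's, and your instinct to reduce \eqref{eq1} to identities tested against smooth cylindrical functionals is also how the paper begins. But after that the routes diverge in an essential way. The paper never applies an It\^o formula to a mollification $F_\varepsilon$ and never passes to a limit inside a stochastic formula. Having dualized against $G=g(B_{t_1},\dots,B_{t_n})$, it rewrites every term as a deterministic integral of $F$, $g$ and the explicit joint Gaussian density $p_{s,\tau}$ of $(B_{t_1},\dots,B_{t_n},B_s)$, moves the spatial derivatives off $F$ and onto $g\,p_{s,\tau}$ by integration by parts for each fixed $s$ (this deterministic step is the only place where regularization of $F$ is invoked, justified via condition (2)), and then concludes from the heat-type equation
\[
\frac{\partial p_{s,\tau}}{\partial s}=\sum_{i=1}^{N}\sum_{j=1}^{n}\frac{\partial R}{\partial s}(s,t_{j})\frac{\partial^{2}p_{s,\tau}}{\partial y_{ij}\partial x_{i}}+\sum_{i=1}^{N}Hs^{2H-1}\frac{\partial^{2}p_{s,\tau}}{\partial x_{i}^{2}},
\]
so the two sides of \eqref{eq1} are matched at the level of Gaussian integrals, with no convergence of mollified Skorohod integrands or of mollified trace terms ever required.

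The step you flag as ``the delicate point'' and leave open is a genuine gap, and it is structural rather than a matter of sharper estimates. The mollified second derivative is $\frac{\partial^2 F_\varepsilon}{\partial x_i^2}=(D^2_{ii}F)\ast\phi_\varepsilon$, where $D^2_{ii}F$ is the \emph{distributional} second derivative; hypotheses (1)--(3) constrain only the pointwise derivatives on $G$ and say nothing about a possible singular part of $D^2_{ii}F$ concentrated on the Lebesgue-null set $G^c$. Such a singular part survives every mollification and produces an extra local-time-type term in the limit of your trace term (and simultaneously ruins the convergence of $\frac{\partial F_\varepsilon}{\partial x_i}(B_\cdot)$ in $\mathbb{L}^{1,1/H}_{H,i}$ needed to pass the limit through \eqref{estimatedelta}). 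Concretely, for $N=1$ and $F(x)=|x|$ all three hypotheses hold with $G=\mathbb{R}\setminus\{0\}$ and $\frac{\partial^2F}{\partial x^2}\equiv 0$ on $G$, yet \eqref{eq1} would give $\mathbb{E}|B_t|=0$ upon taking expectations; here your mollified formula is valid for every $\varepsilon$, but its trace term converges to a local time, not to zero. So dominated convergence under (2)--(3), which is all your plan provides, cannot yield \eqref{eq1}: one must inject the additional information that the a.e.\ derivatives of $F$ coincide with its distributional derivatives (no singular part charges $G^c$). In the paper this input is confined to, and exploited in, the deterministic spatial integration-by-parts step (the ``regularization procedure'' under condition (2), which in the eigenvalue application is legitimate because the maps $\Phi_i$ are Lipschitz and the collision set has codimension two); your route through a limit of stochastic It\^o formulas has no mechanism for using it.
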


\begin{proof}
Notice first that the processes $F(B_{t})$, $\frac{\partial F}{\partial x_{i}%
}(B_{t})$ and $\frac{\partial^{2}F}{\partial x_{i}^{2}}(B_{t})$ are well
defined because the probability that $B_{t}$ belongs to $G^{c}$ is zero. On
the other hand, $F(0)$ is also well defined as the limit in $L^{1}(\Omega)$ of
$F(B_{t})$ as $t$ tends to zero. This limit exists because for any $s<t$ we
can write
\begin{align*}
|F(B_{t})-F(B_{s})| &  =\left\vert \int_{0}^{1}\sum_{i=1}^{N}\frac{\partial
F}{\partial x_{i}}(B_{s}+\sigma(B_{t}-B_{s}))(B_{t}^{i}-B_{s}^{i}%
)d\sigma\right\vert \\
&  \leq C\left(  1+\frac{1}{M+1}|B_{t}-B_{s}|^{M}\right)  .
\end{align*}
Conditions (2) and (3) imply that for each $i=1,\dots,N$, the process
$u_{i}(s)=\frac{\partial F}{\partial x_{i}}(B_{s})\mathbf{1}_{[0,t]}(s)$
belongs to the space $\mathbb{L}_{H,i}^{1,1/H}$. In fact,
\[
\mathbb{E}\left[  \int_{0}^{T}\left\vert u_{i}(s)\right\vert ^{1/H}ds\right]
\leq C^{1/H}\mathbb{E}\left[  \int_{0}^{T}(1+|B_{s}|^{M})^{1/H}ds\right]
<\infty,
\]
and
\[
\mathbb{E}\left[  \int_{0}^{T}\int_{0}^{T}|D_{r}^{(i)}u_{i}(s)|^{\frac{1}{H}%
}drds\right]  =\mathbb{E}\left[  \int_{0}^{T}s\left\vert \frac{\partial^{2}%
F}{\partial x_{i}^{2}}(B_{s})\right\vert ^{\frac{1}{H}}ds\right]  \leq CT.
\]
On the other hand, taking $p=1$ in condition (3), we also have for each
$i=1,\dots,N$,
\[
\mathbb{E}\left[  \int_{0}^{t}\left\vert \frac{\partial^{2}F}{\partial
x_{i}^{2}}(B_{s})\right\vert s^{2H-1}ds\right]  <\infty.
\]
As a consequence, all terms in equation (\ref{eq1}) are well defined. Then, to
prove the equality it suffices to show that for any random variable of the
form
\[
G=g(B_{t_{1}},\dots,B_{t_{n}}),
\]
where $0<t_{1}<\cdots<t_{i}<t<t_{i+1}<\cdots t_{n}$, for some $i=0,1,\dots,n$,
where $g$ is $C^{\infty}$ with compact support, we have
\begin{equation}
\mathbb{E}\left[  GF(B_{t})-GF(0)\right]  =\sum_{i=1}^{N}\mathbb{E}\left[
G\int_{0}^{t}\frac{\partial F}{\partial x_{i}}(B_{s})\delta B_{s}^{i}\right]
+H\sum_{i=1}^{N}\mathbb{E}\left[  G\int_{0}^{t}\ \frac{\partial^{2}F}{\partial
x_{i}^{2}}(B_{s})s^{2H-1}ds\right]  .\label{pon}%
\end{equation}
Denote by $p_{t,\tau}(y,x)$ the joint density of the vector $(B_{t_{1}}%
,\dots,B_{t_{n}},B_{t})$, where $\tau=(t_{1},\dots,t_{n})$. By the duality
between the Skorohod integral and the derivative operator (see (\ref{dua})),
we can write
\begin{align*}
\mathbb{E}\left[  G\int_{0}^{t}\frac{\partial F}{\partial x_{i}}(B_{s})\delta
B_{s}^{i}\right]   &  =\mathbb{E}\left[  \left\langle D^{i}G,\frac{\partial
F}{\partial x_{i}}(B_{\cdot})\mathbf{1}_{[0,t]}\right\rangle _{\mathcal{H}%
}\right]  \\
&  =\sum_{j=1}^{n}\mathbb{E}\left[  \frac{\partial g}{\partial y_{ij}%
}(B_{t_{1}},\dots,B_{t_{n}})\left\langle \mathbf{1}_{[0,t_{j}]},\frac{\partial
F}{\partial x_{i}}(B_{\cdot})\mathbf{1}_{[0,t]}\right\rangle _{\mathcal{H}%
}\right]  \\
&  =\int_{0}^{t}\sum_{j=1}^{n}\mathbb{E}\left[  \frac{\partial g}{\partial
y_{ij}}(B_{t_{1}},\dots,B_{t_{n}})\frac{\partial F}{\partial x_{i}}%
(B_{s})\right]  \frac{\partial R}{\partial s}(s,t_{j})ds\\
&  =\int_{0}^{t}\sum_{j=1}^{n}\int_{\mathbb{R}^{N(n+1)}}\frac{\partial
g}{\partial y_{ij}}(y)\frac{\partial F}{\partial x_{i}}(x)\frac{\partial
R}{\partial s}(s,t_{j})p_{s,\tau}(x,y)dxdyds.
\end{align*}
We can integrate by parts in the above expression, for each fixed
$s\not \in \{0,t_{1},\dots,t_{n}\}$. We know that $\frac{\partial F}{\partial
x_{i}}$ is only differentiable in $G$, but using condition (2), and a
regularization procedure, we can proof rigorously this integration by parts
argument. In that way we obtain
\[
\mathbb{E}\left[  G\int_{0}^{t}\frac{\partial F}{\partial x_{i}}(B_{s})\delta
B_{s}^{i}\right]  =-\int_{0}^{t}\sum_{j=1}^{n}\int_{\mathbb{R}^{N(n+1)}}%
\frac{\partial g}{\partial y_{ij}}(y)F(x)\frac{\partial R}{\partial s}%
(s,t_{j})\frac{\partial p_{s,\tau}}{\partial x_{i}}(x,y)dxdyds.
\]
On the other hand,
\begin{align*}
\mathbb{E}\left[  GF(B_{t})-GF(0)\right]   &  =\int_{\mathbb{R}^{N(n+1)}%
}F(x)g(y)p_{t,\tau}(x,y)dxdy-\mathbb{E}(G)F(0)\\
&  =\int_{0}^{t}\int_{\mathbb{R}^{N(n+1)}}F(x)g(y)\frac{\partial p_{s,\tau}%
}{\partial s}(x,y)dxdyds.
\end{align*}
The second term in (\ref{pon}) follows by regularizing $F$ with an
approximation of the identity. Also, integrating by parts,
\begin{align*}
\mathbb{E}\left[  G\int_{0}^{t}\frac{\partial^{2}F}{\partial x_{i}^{2}}%
(B_{s})s^{2H-1}ds\right]   &  =\int_{0}^{t}g(y)\frac{\partial^{2}F}{\partial
x_{i}^{2}}(x)p_{s,\tau}(x,y)s^{2H-1}dxdyds\\
&  =\int_{0}^{t}g(y)F(x)\frac{\partial^{2}p_{s,\tau}}{\partial x_{i}^{2}%
}(x,y)s^{2H-1}dxdyds.
\end{align*}
Finally, the result follows from
\[
\frac{\partial p_{s,\tau}}{\partial s}=\sum_{i=1}^{N}\sum_{j=1}^{n}%
\frac{\partial R}{\partial s}(s,t_{j})\frac{\partial^{2}p_{s,\tau}}{\partial
y_{ij}\partial x_{i}}+\sum_{i=1}^{N}Hs^{2H-1}\frac{\partial^{2}p_{s,\tau}%
}{\partial x_{i}^{2}}.
\]

\end{proof}

In the rest of this section we will recall a property on the $1/H$-variation
of the divergence integral established in \cite{GN05}. Fix $T>0$ and set
$t_{i}^{n}:=\frac{iT}{n}$, where $n$ is a positive integer and $i=0,1,\ldots
,n$. Given a stochastic process $X=\{ X_{t}, t\in[ 0,T]\} $ we define for each
$p\ge1$,
\[
V_{n}^{p}(X):=\sum_{i=0}^{n-1}\left|  X_{t_{i+1}^{n}}-X_{t_{i}^{n}}\right|
^{p}.
\]

\begin{definition}
\label{def1} The $p$-variation, $p\ge1$, of a stochastic process $X$ is
defined as the limit, in $L^{1}\left(  \Omega\right)  $, if it exists, of
$V_{n}^{p}(X)$ as $n\rightarrow\infty$.
\end{definition}

Then, following result (see Theorem 4.8 in \cite{GN05}), allows one to compute
the $1/H$-variation of a Skorohod integral with respect to a multidimensional
fractional Brownian motion.

\begin{theorem}
\label{Theorem multidimensional} Let $\frac{1}{2}<H<1$ and $u^{i}\in
\mathbb{L}_{H,i}^{1,1/H}$ for each $i=1,\ldots N$. Set $X_{t}:=$ $\sum
_{i=1}^{N}\int_{0}^{t}u_{s}^{\left(  i\right)  }\delta B_{s}^{\left(
i\right)  }$, for each $t\in\left[  0,T\right]  $. Then
\[
V_{n}^{1/H}(X)\underset{n\rightarrow\infty}{\overset{L^{1}(\Omega
)}{\longrightarrow}}\int_{\mathbb{R}^{N}}\left[  \int_{0}^{T}\left\vert
\left\langle u_{s},\xi\right\rangle \right\vert ^{1/H}\mathrm{d}s\right]
\nu\left(  d\xi\right)  ,
\]
where $\nu$ is the standard normal distribution on $\mathbb{R}^{N}$.
\end{theorem}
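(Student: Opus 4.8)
The plan is to reduce the statement to the case of a simple (step) process and then to exploit the scaling and ergodic structure of fractional Gaussian noise. A useful preliminary observation is an algebraic identity: for any $a\in\mathbb{R}^{N}$, since $\langle a,\xi\rangle$ is $N(0,|a|^{2})$ under $\nu$, one has $\int_{\mathbb{R}^{N}}|\langle a,\xi\rangle|^{1/H}\nu(d\xi)=c_{1/H}|a|^{1/H}$ with $c_{1/H}:=\mathbb{E}|Z|^{1/H}$, $Z\sim N(0,1)$. Hence the claimed limit equals $c_{1/H}\int_{0}^{T}|u_{s}|^{1/H}\,ds$, which shows in particular that it is finite for $u^{(i)}\in\mathbb{L}_{H,i}^{1,1/H}$. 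First I would prove the result for simple processes $u_{s}^{(i)}=\sum_{j}F_{j}^{(i)}\mathbf{1}_{(a_{j},a_{j+1}]}(s)$ with $F_{j}^{(i)}$ smooth, bounded and with bounded derivatives, and then remove this restriction by density, using the Meyer-type estimate (\ref{estimatedelta}) together with the elementary inequality $\big||x|^{1/H}-|y|^{1/H}\big|\le\tfrac1H(|x|^{1/H-1}+|y|^{1/H-1})|x-y|$ and H\"older's inequality to control $V_{n}^{1/H}(X)-V_{n}^{1/H}(\tilde X)$ uniformly in $n$ when $\tilde X$ is built from an approximating step process.

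For a simple process, on each subinterval $(t_{i}^{n},t_{i+1}^{n}]$ contained in a block $(a_{j},a_{j+1}]$ the Skorohod increment decomposes as
\[
X_{t_{i+1}^{n}}-X_{t_{i}^{n}}=\langle F_{j},\Delta_{i}B\rangle-R_{i},\qquad \Delta_{i}B:=\big(B^{(k)}_{t_{i+1}^{n}}-B^{(k)}_{t_{i}^{n}}\big)_{k=1}^{N},
\]
where the remainder $R_{i}=\sum_{k}\langle D^{(k)}F_{j}^{(k)},\mathbf{1}_{(t_{i}^{n},t_{i+1}^{n}]}\rangle_{\mathcal H}$ arises from the non-adaptedness of the integrand. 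Since $2H-2>-1$, a direct bound gives $|R_{i}|\le C/n$. Raising to the power $1/H>1$ and summing the at most $n$ increments, the contribution of the remainders to $V_{n}^{1/H}(X)$ is $O(n^{1-1/H})\to0$; the intervals straddling a block boundary are finite in number and likewise negligible. Thus it suffices to analyze $M_{n}:=\sum_{i}|\langle F_{j(i)},\Delta_{i}B\rangle|^{1/H}$.

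By $H$-self-similarity the normalized increments $\zeta_{i}:=(T/n)^{-H}\Delta_{i}B$ form, jointly in $i$, a standardized $N$-dimensional fractional Gaussian noise whose law does not depend on $n$: each $\zeta_{i}\sim N(0,I_{N})$, the components are independent across $k=1,\dots,N$, and for fixed $k$ the correlation between the $k$-th components of $\zeta_{i}$ and $\zeta_{j}$ is $\rho(i-j)\sim|i-j|^{2H-2}$. Then $M_{n}=\frac{T}{n}\sum_{i}|\langle F_{j(i)},\zeta_{i}\rangle|^{1/H}$, and within a block of length $\ell_{j}$ the number of terms is $\approx n\ell_{j}/T$. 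I would show block by block that $\frac{T}{n}\sum_{i\in I_{j}}|\langle F_{j},\zeta_{i}\rangle|^{1/H}$ converges in $L^{1}$ to $\ell_{j}\int_{\mathbb{R}^{N}}|\langle F_{j},\xi\rangle|^{1/H}\nu(d\xi)$; summing over blocks then produces a Riemann sum converging to $\int_{0}^{T}\int_{\mathbb{R}^{N}}|\langle u_{s},\xi\rangle|^{1/H}\nu(d\xi)\,ds$. The fluctuations are controlled through the Wiener chaos expansion of $\zeta\mapsto|\langle a,\zeta\rangle|^{1/H}$: being even, it has Hermite rank two, so the covariance of $|\langle a,\zeta_{i}\rangle|^{1/H}$ and $|\langle a,\zeta_{j}\rangle|^{1/H}$ decays like $\rho(i-j)^{2}\sim|i-j|^{4H-4}$, whence the variance of the block average is Ces\`aro-summable to zero for every $H\in(1/2,1)$ and the law of large numbers holds in $L^{2}$ for bounded coefficients.

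The main obstacle is the passage to the $L^{1}$ limit in the presence of two coupled difficulties. First, the coefficient $F_{j}$ is a functional of $B$ and is therefore correlated with the very increments $\zeta_{i}$ entering the block sum; the resolution is that $F_{j}$ depends on any single increment $\Delta_{i}B$ only through a perturbation of order $(T/n)^{H}\to0$, so that replacing $F_{j}$ by a version frozen off the block increments changes $M_{n}$ by a vanishing amount and restores the independence needed for the ergodic argument. Second, the hypothesis $u^{(i)}\in\mathbb{L}_{H,i}^{1,1/H}$ only guarantees $|F_{j}|^{1/H}\in L^{1}(\Omega)$, whereas the $L^{2}$ law of large numbers above is established for bounded coefficients; I would bridge this gap by truncating $|F_{j}|\le K$, applying the bounded case, and letting $K\to\infty$ using the $L^{1}$ control and a uniform integrability estimate for the truncation error, itself obtained from (\ref{estimatedelta}). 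Combining the negligibility of the remainders, the block-wise law of large numbers, and the Riemann-sum convergence of the means then yields $V_{n}^{1/H}(X)\to\int_{\mathbb{R}^{N}}\int_{0}^{T}|\langle u_{s},\xi\rangle|^{1/H}\,ds\,\nu(d\xi)$ in $L^{1}(\Omega)$. I expect the delicate interplay between the long-range dependence of fractional Gaussian noise (which for $H\ge3/4$ prevents summability of the raw covariances and forces reliance on the Hermite-rank-two cancellation) and the merely $L^{1}$ integrability of the coefficients to be the technically hardest part.
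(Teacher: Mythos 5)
The paper does not actually prove this theorem: it is quoted from Guerra and Nualart \cite{GN05} (Theorem 4.8 there), so your attempt can only be measured against that proof's strategy. Your overall architecture is essentially the same and is sound: reduce to smooth bounded simple processes via the Meyer-type estimate (\ref{estimatedelta}) (applied to $(u-\tilde u)\mathbf{1}_{(t_i^n,t_{i+1}^n]}$ and summed over $i$, this does give a bound on $\mathbb{E}\,V_n^{1/H}(X-\tilde X)$ uniform in $n$); decompose each Skorohod increment as $\langle F_j,\Delta_i B\rangle$ minus the trace term $\sum_k \langle D^{(k)}F_j^{(k)},\mathbf{1}_{(t_i^n,t_{i+1}^n]}\rangle_{\mathcal H}$, which is indeed $O(1/n)$ and negligible after raising to the power $1/H$; and identify the limit through $\int_{\mathbb{R}^N}|\langle a,\xi\rangle|^{1/H}\nu(d\xi)=\mathbb{E}\bigl[|Z|^{1/H}\bigr]\,|a|^{1/H}$.

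The genuine gap is in the step you yourself flag as the main obstacle: the dependence between the random coefficients $F_j$ and the increments $\Delta_i B$. Your proposed fix --- replace $F_j$ by a version ``frozen off the block increments'' so as to ``restore independence'' --- cannot work for fractional Brownian motion: unlike the Brownian case, the increments inside a block are correlated with the entire path outside it (long-range dependence), so no conditioning or freezing on the complement of a block decouples it from the block increments; moreover the assertion that $F_j$ depends on a single increment ``through a perturbation of order $(T/n)^H$'' is not a well-defined estimate, and even granting it, $n$ such perturbations need not be negligible in the aggregate. The standard repair needs no independence at all. For deterministic $a$, the centered sums $S_n(a):=\sum_{i\in I_j}\bigl(|\langle a,\Delta_i B\rangle|^{1/H}-\mathbb{E}|\langle a,\Delta_i B\rangle|^{1/H}\bigr)$ tend to $0$ in $L^2$ by exactly the stationarity and covariance argument you give; one then upgrades this to $\sup_{|a|\le K}|S_n(a)|\to 0$ in $L^1$ using that $a\mapsto|\langle a,z\rangle|^{1/H}$ is Lipschitz on compacts (via $\bigl||x|^{p}-|y|^{p}\bigr|\le p\,(|x|^{p-1}+|y|^{p-1})|x-y|$, noting $\mathbb{E}\sum_i|\Delta_i B|^{1/H}$ is bounded uniformly in $n$) together with a finite net; finally one substitutes the random point $a=F_j$, which is legitimate precisely because the estimate is uniform over the ball $|a|\le K$ containing $F_j$. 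This also shows your separate truncation step is redundant: after the first reduction the $F_j$ are already bounded. Two smaller remarks: the Hermite-rank-two refinement is unnecessary, since Ces\`aro decay of the covariances (which holds already at rank one, as $2H-2<0$) suffices for the $L^2$ law of large numbers; so the ``technically hardest part'' is not where you place it, but in the weighted substitution just described, which as written in your proposal is not justified.
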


\section{No colliding of eigenvalues of a matrix Gaussian H\"{o}lder
continuous}

In this section we will show that for the random symmetric matrix
corresponding to a general Gaussian process with H\"older continuous
trajectories of order larger than $1/2$, the eigenvalues do not collide almost surely.

Suppose that $x=\{x(t),t\geq0\}$ is a zero mean Gaussian process satisfying
\begin{equation}
\mathbb{E}(|x(t)-x(s)|^{2})\leq C_{T}|t-s|^{2\gamma}, \label{CovIneq}%
\end{equation}
for any $s,t\in\lbrack0,T]$, where $\gamma\in(1/2,1)$. Suppose also that
$x(0)=0$ and the variance of $x(t)$ is positive for any $t>0$. We know that,
by Kolmogorov continuity theorem, the trajectories of $x$ are H\"{o}lder
continuous of order $\beta$ for any $\beta<\gamma$.

Consider a symmetric random matrix of the form
\[
X(t)=X(0)+\widehat{X}(t),
\]
where $X(0)$ is a fixed deterministic symmetric matrix and $\widehat{X}%
_{ij}(t)=x_{ij}(t)$ if $i<j$ and $\widehat{X}_{ii}(t)=\sqrt{2}x_{i,i}(t)$,
where $\{x_{ij},i\leq j \}$ are independent copies of the process $x$. For any
$t>0$ the matrix $X(t)$ has full rank a.s. The following is the main result of
this section.

\begin{theorem}
\label{ThNC} Denote by $\lambda_{i}(t)$ the eigenvalues of the random matrix
$X(t)$, $i=1,\dots,n$. We can assume that $\lambda_{1}(t)\geq\cdots\geq
\lambda_{d}(t)$. Then,
\begin{equation}
P(\lambda_{1}(t)>\cdots>\lambda_{d}(t),\forall t>0)=1. \label{NCExp}%
\end{equation}

\end{theorem}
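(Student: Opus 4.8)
The plan is to show that the set of times at which two eigenvalues coincide is empty almost surely. The key observation is that two eigenvalues of a symmetric matrix coincide exactly when the discriminant of the characteristic polynomial vanishes. Writing $\Delta(t) = \prod_{i<j}(\lambda_i(t) - \lambda_j(t))^2$, this is a symmetric function of the eigenvalues and hence a polynomial in the entries of $X(t)$, so $\Delta(t) = P(X(t))$ for some fixed polynomial $P$ in the $d(d+1)/2$ entries. The eigenvalues collide at time $t$ precisely when $\Delta(t) = 0$. Since the entries $x_{ij}(t)$ are H\"older continuous of order $\beta < \gamma$, the process $\Delta(t)$ is itself H\"older continuous, and the collision set is the zero set of this real-valued process. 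I would reduce the problem to showing that $\{t > 0 : \Delta(t) = 0\}$ is empty with probability one.

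The natural strategy is to apply the Young/pathwise change-of-variables formula to $\Delta(t)$ and derive that $\Delta$ cannot touch zero. First I would fix a time $t_0 > 0$ and argue on an interval $[t_0, T]$, taking a union over rational $t_0$ at the end to cover all $t > 0$; this is needed because the H\"older and nondegeneracy estimates deteriorate near $0$. On such an interval, since $H = \gamma > 1/2$, the entries have finite $1/\gamma$-variation and the Young integral applies: for a smooth function $f$ one has the chain rule $f(\Delta(t)) = f(\Delta(t_0)) + \int_{t_0}^t f'(\Delta(s))\, d\Delta(s)$, where the integral is a Young integral and there is no second-order correction term. The idea is to choose $f(x) = \log|x|$ (or a smooth truncation thereof) and show that if $\Delta$ approached zero, the Young integral of $d(\log|\Delta|)$ would blow up, contradicting its finiteness as a Young integral against the finite-variation driver. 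More concretely, I would study $\log|\Delta(t)|$ directly: the Young calculus gives $d\log|\Delta(t)| = \sum_{i<j}\frac{2\, d(\lambda_i - \lambda_j)(t)}{\lambda_i(t) - \lambda_j(t)}$, and the goal is to rule out $\log|\Delta(t)| \to -\infty$ in finite time.

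The crucial analytic input is a quantitative nondegeneracy estimate: the Gaussian density of the increments must ensure that the smallest gap $\min_{i<j}|\lambda_i(t) - \lambda_j(t)|$ does not decay too fast. Here I would use that for each $t > 0$ the matrix $X(t)$ is a GO-type random matrix whose joint eigenvalue density exists and vanishes on the collision set, giving control of the form $P(|\lambda_i(t) - \lambda_j(t)| < \varepsilon)$ that is $O(\varepsilon^2)$ or better, uniformly on compact time intervals bounded away from $0$. Combining this with the H\"older modulus of continuity of the eigenvalues (which follows from that of the entries via perturbation bounds such as $|\lambda_i(t) - \lambda_i(s)| \le \|X(t) - X(s)\|$) should yield that the Young integral representing $\log|\Delta|$ stays finite, so $\Delta$ never reaches zero on $[t_0, T]$.

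The main obstacle will be making the blow-up argument rigorous, since $f'(x) = 1/x$ is unbounded precisely where the collision might occur, so the naive Young chain rule does not apply up to the collision time. The technical heart of the proof is therefore to localize: introduce stopping times $\tau_\varepsilon = \inf\{t \ge t_0 : \min_{i<j}|\lambda_i(t) - \lambda_j(t)| \le \varepsilon\}$, apply the Young calculus on $[t_0, \tau_\varepsilon]$ where $f$ is smooth, and then show that $\lim_{\varepsilon \to 0}\tau_\varepsilon = +\infty$ (or at least exceeds $T$) almost surely. Controlling the Young integral $\int_{t_0}^{\tau_\varepsilon}\frac{d(\lambda_i - \lambda_j)(s)}{\lambda_i(s) - \lambda_j(s)}$ uniformly as $\varepsilon \to 0$ — using the $\beta$-H\"older norm of the numerator and the nondegeneracy estimate for the denominator together with the Young integral estimate $|\int g\, dh| \lesssim \|g\|_{\beta'}\|h\|_\beta$ for $\beta + \beta' > 1$ — is where the delicate balancing between the H\"older exponent $\gamma > 1/2$ and the rate of approach to the diagonal must be carried out, and this is the step I expect to require the most care.
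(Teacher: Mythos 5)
Your outline has the same skeleton as the paper's proof: fix $t_{0}>0$, use the fixed-time GO-ensemble density to get distinct eigenvalues at $t_{0}$, get H\"older continuity of the $\lambda_{i}$ from perturbation bounds (the paper uses Hoffman--Wielandt), apply Young calculus to the logarithm of the gaps on the interval before the first collision time, derive a contradiction from $\log(\lambda_{i}-\lambda_{j})\to-\infty$, and finally let $t_{0}\to0$. (Whether one works with the discriminant $\Delta$ or with the individual gaps $\lambda_{i}-\lambda_{j}$, as the paper does, is immaterial.) The genuine gap is in the step you yourself flag as the technical heart. You propose to control the Young integral $\int_{t_{0}}^{\tau_{\varepsilon}}\frac{d(\lambda_{i}-\lambda_{j})}{\lambda_{i}-\lambda_{j}}$ uniformly in $\varepsilon$ by the estimate $\left|\int g\,dh\right|\lesssim\|g\|_{\beta'}\|h\|_{\beta}$ together with the fixed-time tail bound $P(|\lambda_{i}(t)-\lambda_{j}(t)|<\varepsilon)=O(\varepsilon^{2})$. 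This cannot close as stated: on $[t_{0},\tau_{\varepsilon}]$ the $\beta'$-H\"older norm of $1/(\lambda_{i}-\lambda_{j})$ is of order $\varepsilon^{-2}$ (the denominator can be as small as $\varepsilon$ there), so the Young bound blows up as $\varepsilon\to0$ unless you already know, pathwise, how slowly the gap can approach zero --- which is essentially the statement to be proved. A tail estimate at each fixed $t$ does not convert into a pathwise lower bound on the gap, or on the H\"older norm of its reciprocal, over a time interval; that conversion is the whole difficulty, and your proposal supplies no mechanism for it.

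The paper's resolution differs at exactly this point. Instead of seeking a pathwise uniform bound, it first rewrites the Young integral through Z\"ahle's fractional integration by parts, $\int_{t_{0}}^{t}\frac{1}{\lambda_{i}-\lambda_{j}}\,d\lambda_{i}=\int_{t_{0}}^{t}I_{i,j}(s)J_{j}(s)\,ds+\hbox{boundary term}$, where $I_{i,j}(s)$ and $J_{j}(s)$ are fractional derivatives involving the paths only through pointwise differences. This turns the Young integral into a Lebesgue integral in $s$, so one can take expectations and apply Fubini: it then suffices to have $\sup_{s\in[t_{0},t]}\mathbb{E}\left(|\lambda_{i}(s)-\lambda_{j}(s)|^{-q}\right)<\infty$ for suitable $q<2$, which follows from the Vandermonde factor in the fixed-time eigenvalue density together with the variance being bounded below on $[t_{0},t]$; this is the rigorous form of your $O(\varepsilon^{2})$ input. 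Even then, one must ensure that only negative moments of order strictly less than $2$ appear; the paper achieves this by an interpolation, writing the increment of the gap as a product of an $a$-th and a $b$-th power with $a+b=1$, absorbing the $a$ part with the H\"older bound and distributing the $b$ part onto the denominators, with exponents $p_{1},p_{2},p_{3}$ chosen so every factor is integrable. Once $\mathbb{E}\int_{t_{0}}^{t}|I_{i,j}(s)||J_{j}(s)|\,ds<\infty$, the integral is a.s.\ finite up to any time, in particular up to the collision time $\tau$, and the contradiction with $\log(\lambda_{i}(\tau)-\lambda_{j}(\tau))=-\infty$ forces $\tau=\infty$ a.s. To repair your plan, replace the pathwise uniform Young estimate by this expectation argument; that is the missing idea.
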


\begin{proof}
The proof will be done in several steps. Fix $t_{0}>0$. From the well-known
results for the Gaussian Orthogonal random matrix we know that $\lambda
_{1}(t_{0})>\cdots>\lambda_{d}(t_{0})$ almost surely.

Applying the Hoffman-Weilandt inequality (see \cite{HoW53}), we deduce
\[
\sum_{i=1}^{d}(\lambda_{i}(t)-\lambda_{i}(s))^{2}\leq\frac{1}{d}\sum
_{i,j=1}^{d}(X_{ij}(t)-X_{ij}(s))^{2}=\frac{1}{d}\sum_{i,j=1}^{d}%
(\widehat{X}_{ij}(t)-\widehat{X}_{ij}(s))^{2}%
\]
for any $s,t \ge0$. This implies that for each $i$ and each real $p\ge1$,
\begin{equation}
\label{Hol}\mathbb{E}(|\lambda_{i}(t)-\lambda_{i}(s)|^{p})\leq C|t-s|^{p\gamma
},
\end{equation}
and choosing $p$ such that $p\gamma>1$ we deduce that the trajectories of
$\lambda_{i}(t)$ are H\"{o}lder continuous of order $\beta$ for any
$\beta<\gamma$.

Consider the stopping time
\[
\tau=\inf\{t\geq t_{0}:\lambda_{i}(t)=\lambda_{j}(t) \,\,\,
\hbox{for some}\,\,\, i\not =j\}.
\]
Notice that $\tau>t_{0}$ almost surely. On the random interval $[t_{0},\tau)$
the function $\log(\lambda_{i}(t)-\lambda_{j}(t)$, where $i\not =j$, is well
defined and we can use the stochastic calculus with respect to the Young's
integral to write for any $t_{0}\le t<\tau$,
\begin{equation}
\log(\lambda_{i}(t)-\lambda_{j}(t))=\log(\lambda_{i}(t_{0})-\lambda_{j}%
(t_{0}))+\int_{t_{0}}^{t}\frac{1}{\lambda_{i}(s)-\lambda_{j}(s)}\left(
d\lambda_{i}(s)-d\lambda_{j}(s)\right)  . \label{41}%
\end{equation}
The Riemann-Stieltjes integral
\[
I^{i}_{i,j}(t):=\int_{t_{0}}^{t}\frac{1}{\lambda_{i}(s)-\lambda_{j}%
(s)}d\lambda_{i}(s),
\]
can be expressed in terms of fractional derivative operators, following the
approach by Z\"able \cite{Za08}. Choosing $\alpha$ such that $1-\gamma
<\alpha<\frac{1}{2}$, we obtain
\begin{align*}
I^{i}_{i,j}(t) =\int_{t_{0}}^{t}D_{t_{0}+}^{\alpha}(\lambda_{i}-\lambda
_{j})_{t_{0}}^{-1}(s)D_{t-}^{1-\alpha}\lambda_{i,t-}(s)ds +(\lambda_{i}%
(t_{0})-\lambda_{j}(t_{0}))^{-1}\left(  \lambda_{i}(t)-\lambda_{i}%
(t_{0}))\right)  ,
\end{align*}
where
\begin{align*}
I_{i,j}(s):=D_{t_{0}+}^{\alpha}(\lambda_{i}-\lambda_{j})_{0}^{-1}(s)  &
=\frac{1}{\Gamma\left(  1-\alpha\right)  }\Bigg (s^{-\alpha}\left(  \frac
{1}{\lambda_{i}(s)-\lambda_{j}(s)}-\frac{1}{\lambda_{i}(t_{0})-\lambda
_{j}(t_{0})}\right) \\
&  +\alpha\int_{t_{0}}^{s}\frac{(\lambda_{i}(s)-\lambda_{j}(s))^{-1}%
-(\lambda_{i}(y)-\lambda_{j}(y))^{-1}}{\left(  s-y\right)  ^{\alpha+1}%
}dy\Bigg),
\end{align*}
and
\[
J_{j}(s):= D_{t-}^{1-\alpha}\lambda_{i,t-}(s)=\frac{1}{\Gamma\left(
\alpha\right)  }\left(  \frac{\lambda_{i}\left(  s\right)  -\lambda_{i}%
(t)}{\left(  t-s\right)  ^{1-\alpha}}+(1-\alpha)\int_{s}^{t}\frac{\lambda
_{i}\left(  s\right)  -\lambda_{i}\left(  y\right)  }{\left(  y-s\right)
^{2-\alpha}}dy\right)  .
\]
We claim that
\begin{equation}
\label{claim}P\left(  \int_{t_{0}} ^{t} |I_{i,j}(s)||J_{j}(s)| ds <\infty,
\,\, \hbox{for all} \,\,\, t\ge t_{0}, \,\, i\not =j \right)  =1.
\end{equation}
This claim implies that for all $i\not =j$, $\int_{t_{0}} ^{\tau}%
|I_{i,j}(s)||J_{j}(s)| ds <\infty$ almost surely on the set $\{\tau<\infty\}$.
Therefore $P(\tau=\infty)=1$, otherwise we would get a contradiction with
$\log(\lambda_{i}(\tau) -\lambda_{j}(\tau)) =-\infty$.

In order to prove the claim (\ref{claim}) we are going to show that for all
$t\geq t_{0}$, and for all $i\not =j$,
\begin{equation}
\mathbb{E}\left(  \int_{t_{0}}^{t}|I_{i,j}(s)||J_{j}(s)|\right)  ds<\infty.
\label{claim2}%
\end{equation}
In order to show (\ref{claim2}), we first apply H\"{o}lder's inequality with
exponents $p,q>1$ such that $\frac{1}{p}+\frac{1}{q}=1$ and we get
\[
\mathbb{E}(|I_{i,j}(s)||J_{j}(s)|)\leq\lbrack E(|I_{i,j}(s)|^{p})]^{\frac
{1}{p}}[E(|J_{j}(s)|^{q})]^{\frac{1}{q}}.
\]
By the estimate (\ref{Hol}) for any fixed $\beta$ such that $1-\alpha
<\beta<\gamma$ there exists a random variable $G$ with moments of all orders
such that for all $i$
\begin{equation}
|\lambda_{i}(s)-\lambda_{i}(y)|\leq CG|s-y|^{\beta}, \label{Hol2}%
\end{equation}
for all $s,y\in\lbrack t_{0},t]$, which leads to the estimate
\begin{equation}
E(|J_{j}(s)|^{q})\leq C_{q}E(G^{q}), \label{J}%
\end{equation}
for all $q>1$ and for some constant $C>0$. In order to estimate $E(|I_{i,j}%
(s)|^{p})$ we consider first the second summand in the definition of
$I_{i,j}(s)$ denoted by
\[
K_{i,j}(s):=\int_{t_{0}}^{s}\frac{(\lambda_{i}(s)-\lambda_{j}(s))^{-1}%
-(\lambda_{i}(y)-\lambda_{j}(y))^{-1}}{\left(  s-y\right)  ^{\alpha+1}}dy.
\]
This term can be expressed as
\[
K_{i,j}(s)=\int_{t_{0}}^{s}\frac{(\lambda_{i}(y)-\lambda_{j}(y)-\lambda
_{i}(s)+\lambda_{j}(s))}{\left(  s-y\right)  ^{\alpha+1}[(\lambda
_{i}(s)-\lambda_{j}(s)][(\lambda_{i}(y)-\lambda_{j}(y))]}dy.
\]
Then, if $a+b=1$, using the estimate (\ref{Hol2}), we obtain
\begin{align*}
|K_{i,j}(s)|  &  \leq\int_{t_{0}}^{s}\frac{|\lambda_{i}(y)-\lambda
_{j}(y)-\lambda_{i}(s)+\lambda_{j}(s)|^{a}|\lambda_{i}(y)-\lambda
_{j}(y)-\lambda_{i}(s)+\lambda_{j}(s)|^{b}}{\left(  s-y\right)  ^{\alpha
+1}|(\lambda_{i}(s)-\lambda_{j}(s)||(\lambda_{i}(y)-\lambda_{j}(y))|}dy\\
&  \leq(2G)^{a}\int_{t_{0}}^{s}\frac{|\lambda_{i}(y)-\lambda_{j}%
(y)-\lambda_{i}(s)+\lambda_{j}(s)|^{b}}{|(\lambda_{i}(s)-\lambda
_{j}(s)||(\lambda_{i}(y)-\lambda_{j}(y))|}(s-y)^{a\beta-\alpha-1}dy\\
&  \leq(2G)^{a}\int_{t_{0}}^{s}\Big(|\lambda_{i}(y)-\lambda_{j}(y)|^{b-1}%
|(\lambda_{i}(s)-\lambda_{j}(s)|^{-1}\\
&  +|\lambda_{i}(y)-\lambda_{j}(y)|^{-1}|(\lambda_{i}(s)-\lambda_{j}%
(s)|^{b-1}\Big)(s-y)^{a\beta-\alpha-1}dy.
\end{align*}
Therefore
\begin{align*}
\Vert K_{i,j}(s)\Vert_{p}  &  \leq2^{a}\Vert G^{a}\Vert_{p_{1}}\int_{t_{0}%
}^{s}\Big(\Vert|\lambda_{i}(y)-\lambda_{j}(y)|^{b-1}\Vert_{p_{2}}\Vert
|\lambda_{i}(s)-\lambda_{j}(s)|^{-1}\Vert_{p_{3}}\\
&  +\Vert|\lambda_{i}(y)-\lambda_{j}(y)|^{-1}\Vert_{p_{3}}\Vert|\lambda
_{i}(s)-\lambda_{j}(s)|^{b-1}\Vert_{p_{2}}\Big)(s-y)^{a\beta-\alpha-1}dy,
\end{align*}
where $\frac{1}{p}=\frac{1}{p_{1}}+\frac{1}{p_{2}}+\frac{1}{p_{3}}$, with
$p_{i}>1$ for $i=1,2,3$. We choose $a,p_{1},p_{2}$ and $p_{3}$ such that
\[
a>\frac{\alpha}{\beta},\quad p_{3}<2,\quad p_{2}<\frac{2\beta}{\alpha},
\]
which is possible by taking $p$ and $p_{1}$ close to $1$ and using the
inequality $\alpha<\frac{1}{2}<\beta$. Finally, to complete the proof we need
to estimate the expectation
\begin{equation}
\mathbb{E}(|(\lambda_{i}(s)-\lambda_{j}(s)|^{-q}) \label{4.1}%
\end{equation}
when $q<2$. The joint density of the eigenvalues $\lambda_{1}(s)>\cdots
>\lambda_{d}(s)$ is given by%

\begin{equation}
c_{d}\prod_{k<h}\left\vert \lambda_{k}-\lambda_{h})\right\vert \sigma
(s)^{-d(d+1)/2}\exp\left(  -\sum_{i=1}^{d}\frac{\lambda_{i}^{2}}{4\sigma
^{2}(s)}\right)  , \label{eEOrht}%
\end{equation}
where $c_{d}$ is a constant depending only on $d$ and $\sigma^{2}(s)$ is the
variance of $x(s);$ see \cite[Th. 2.5.2]{AGZ09}. Then, the expectation
$\mathbb{E}(|(\lambda_{i}(s)-\lambda_{j}(s)|^{-q})$ can be estimated up to a
constant by
\[
\int_{\mathbb{R}^{d}}\prod_{k<h}(\lambda_{k}-\lambda_{h})|\lambda_{i}%
-\lambda_{j}|^{-q}\sigma(s)^{-d(d+1)/2}\exp\left(  -\sum_{i=1}^{d}%
\frac{\lambda_{i}^{2}}{4\sigma^{2}(s)}\right)  d\lambda.
\]
Making the change of variable $\lambda_{i}=\sigma(s)\mu_{i}$ we get
\[
\mathbb{E}(|(\lambda_{i}(s)-\lambda_{j}(s)|^{-q})\leq C\sigma(s)^{-q}.
\]
Our hypothesis of positivity of the variance, together with its continuity,
imply that $\sigma(s)$ is bounded away from zero in the interval $[t_{0},t]$.
Therefore, $\mathbb{E}(|(\lambda_{i}(s)-\lambda_{j}(s)|^{-q})$ is uniformly
bounded on $[t_{0},t]$. This allows us to complete the proof of (\ref{claim2}%
). So, the claim (\ref{claim}) holds, and this implies that $P(\tau=\infty
)=1$. Finally, letting $t_{0}$ tend to zero we obtain the desired result.
\end{proof}

\begin{remark}
\label{GU} Theorem \ref{ThNC} also holds in the case of a random Hermitian
matrix corresponding to a general Gaussian process with H\"{o}lder continuous
trajectories of order $\gamma.$ Namely, consider an Hermitian random matrix
\[
X(t)=X(0)+\widehat{X}(t),
\]
where $X(0)$ is a fixed deterministic Hermitian matrix and
\[
\widehat{X}_{ij}(t)=%
\begin{cases}
\frac{1}{\sqrt{2}}\left\{  \mathrm{Re}(x_{ij}(t))+\sqrt{-1}\mathrm{Im}%
(x_{ij}(t))\right\}  & \mbox{if}\,\,i\neq j,\\
x_{ii}(t) & \mbox{if}\,\,i=j,
\end{cases}
\]
where $\{\mathrm{Re}(x_{ij}(t)),\mathrm{Im}(x_{ij}(t)),i<j,x_{ii}(t)\}$ are
independent copies of a zero mean Gaussian process $x=\{x(t),t\geq0\}$
satisfying (\ref{CovIneq}) for any $s,t\in\lbrack0,T]$, where $\gamma
\in(1/2,1)$. Since for each $t>0$ $X(t)$ is a Gaussian unitary ensemble, then
the matrix $X(t)$ has full rank and distinct eigenvalues a.s. In this case the
joint density of the eigenvalues $\lambda_{1}(s)>\cdots>\lambda_{d}(s)$ is
given by (see \cite[Th. 2.5.2]{AGZ09})%
\begin{equation}
\widetilde{c}_{d}\prod_{k<h}\left\vert \lambda_{k}-\lambda_{h}\right\vert
^{2}\sigma(s)^{-d^{2}}\exp\left(  -\sum_{i=1}^{d}\frac{\lambda_{i}^{2}%
}{2\sigma^{2}(s)}\right)  , \label{EinHer}%
\end{equation}
where $\widetilde{c}_{d}$ is a constant depending only on $d$ and $\sigma
^{2}(s)$ is the variance of $x(s),$ which is assumed to be positive. Then,
proceeding as in the proof of Theorem \ref{ThNC} one can prove that if
$\lambda_{i}(t)$ denote the eigenvalues of the random matrix $X(t)$,
$i=1,\dots,n$ and $\lambda_{1}(t)\geq\cdots\geq\lambda_{d}(t)$, then
(\ref{NCExp}) also holds.
\end{remark}

\section{Stochastic differential equation for the eigenvalues of a matrix fbm}

We first consider several needed facts about eigenvalues as functions of
entries of a symmetric matrix. We denote by $\mathcal{H}_{d}$ the collection
of symmetric $d$-dimensional matrices. For a matrix $X=(x_{ij})\in
\mathcal{H}_{d}$ we use the coordinates $x_{ij},i\leq j$, and in this way we
identify $\mathcal{H}_{d}$ with $\mathbb{R}^{d(d+1)/2}$. We denote by
$\mathcal{H}_{d}^{vg}$ the set of matrices $X\in\mathcal{H}_{d}$ such that
there is a factorization
\[
X=UDU^{\ast},
\]
where $D$ is a diagonal matrix with entries $\lambda_{i}=D_{ii}$ such that
$\lambda_{1}>\lambda_{2}>\cdots>\lambda_{d}$, $U$ is an orthogonal matrix,
with $U_{ii}>0$ for all $i$, $U_{ij}\not =0$ for all $i,j$ and all minors of
$U$ have non zero determinants. The matrices in the set $\mathcal{H}_{d}^{vg}$
are called \textit{very good} matrices, and we can identify $\mathcal{H}%
_{d}^{vg}$ as an open subset of $\mathbb{R}^{d(d+1)/2}$. It is known that the
complement of $\mathcal{H}_{d}^{vg}$ has zero Lebesgue measure.

Denote by $\mathcal{U}_{d}^{vg}$ the set of all orthogonal matrices $U$, with
$U_{ii}>0$ for all $i$, $U_{ij}\not =0$ for all $i,j$ and all minors of $U$
have non zero determinants. Let $\mathcal{S}_{d}$ be the open simplex
\begin{equation}
\mathcal{S}_{d}=\{(\lambda_{1},\dots,\lambda_{d})\in\mathbb{R}^{d}:\lambda
_{1}>\lambda_{2}>\cdots>\lambda_{d}\}.\label{OpSimpx}%
\end{equation}
For any $\lambda=(\lambda_{1},\dots,\lambda_{d})\in\mathcal{S}_{d}$, let
$D_{\lambda}$ the diagonal matrix such that $D_{ii}=\lambda_{i}$. On the other
hand, consider the mapping $T:\mathcal{U}_{d}^{vg}\rightarrow\mathbb{R}%
^{d(d-1)/2}$ defined by
\[
T(U)=\left(  \frac{U_{12}}{U_{11}},\dots,\frac{U_{1d}}{U_{11}},\dots
,\frac{U_{d-1,d}}{U_{d-1,d-1}}\right)  .
\]
It is known that $T$ is bijective and smooth. Then, the mapping $\hat
{T}:\mathcal{S}_{d}\times T(\mathcal{U}_{d}^{vg})\rightarrow\mathcal{H}%
_{d}^{vg}$ given by $\hat{T}(\lambda,z)=T^{-1}(z)D_{\lambda}T^{-1}(z)^{\ast}$
is a smooth bijection. Denote by $\Psi$ the inverse of $\hat{T}$. Then,
\[
\Psi(X)=(\Phi(X),T(U)).
\]
As a consequence of these results, $\lambda(X)=\Phi(X)$ is a smooth function
of $X\in\mathcal{H}_{d}^{vg}$.

Suppose that $X$ is a smooth function of a parameter $\theta\in\mathbb{R}$.
Then, we know that
\[
\partial_{\theta}\lambda_{i}=(U^{\ast}\partial_{\theta}XU)_{ii}%
\]
and
\[
\partial_{\theta}^{2}\lambda_{i}=(U^{\ast}\partial_{\theta}^{2}XU)_{ii}%
+2\sum_{j\not =i}\frac{|(U^{\ast}\partial_{\theta}XU)_{ij}|^{2}}{\lambda
_{i}-\lambda_{j}}.
\]
In particular if $\theta=x_{kh}$ with $k\leq h$, then
\[
\frac{\partial\lambda_{i}}{\partial x_{kh}}=2U_{ik}U_{ih}\mathbf{1}%
_{\{k\not =h\}}+U_{ik}^{2}\mathbf{1}_{\{k=h\}},
\]
and
\[
\frac{\partial^{2}\lambda_{i}}{\partial x_{kh}^{2}}=2\sum_{j\not =i}%
\frac{|U_{ik}U_{jh}+U_{ih}U_{jk}|^{2}}{\lambda_{i}-\lambda_{j}}\mathbf{1}%
_{\{k\not =h\}}+2\sum_{j\not =i}\frac{|U_{ik}U_{jk}|^{2}}{\lambda_{i}%
-\lambda_{j}}\mathbf{1}_{\{k=h\}}.
\]

Consider now a family of independent fractional Brownian motions with Hurst
parameter $H\in(1/2,1)$, $b=\{\{b_{ij}(t),t\geq0$\}$,1\leq i\leq j\leq d\}$.
We define the \textit{symmetric matrix fractional Brownian motion (with
parameter} $H)$ $B(t)$ by $B_{i,j}(t)=b_{ij}(t)$ if $i<j$ and $B_{ii}%
(t)=\sqrt{2}b_{i,i}(t)$. We identify $B(t)$ as an element in $\mathbb{R}%
^{d(d+1)/2}$.

As a consequence of the previous discussion, we have the following result.

\begin{lemma}
\label{LemSmooth} For any $i=1,\dots,d$, there exists a function $\Phi
_{i}:\mathbb{R}^{d(d+1)/2}\rightarrow\mathbb{R}$, which is $C^{\infty}$ in an
open subset $G\subset\mathbb{R}^{d(d+1)/2}$, with $|G^{c}|=0$, such that
$\lambda_{i}(t)=\Phi_{i}(b(t))$. Moreover, for any $k\leq h$
\begin{equation}
\frac{\partial\Phi_{i}}{\partial b_{kh}}=2U_{ik}U_{ih}\mathbf{1}%
_{\{k\not =h\}}+\sqrt{2}U_{ik}^{2}\mathbf{1}_{\{k=h\}}, \label{tt1}%
\end{equation}
and
\[
\frac{\partial^{2}\Phi_{i}}{\partial b_{kh}^{2}}=2\sum_{j\not =i}\frac
{|U_{ik}U_{jh}+U_{ih}U_{jk}|^{2}}{\lambda_{i}-\lambda_{j}}\mathbf{1}%
_{\{k\not =h\}}+4\sum_{j\not =i}\frac{|U_{ik}U_{jk}|^{2}}{\lambda_{i}%
-\lambda_{j}}\mathbf{1}_{\{k=h\}}.
\]

\end{lemma}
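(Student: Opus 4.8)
The plan is to realize each $\Phi_i$ as the composition of the eigenvalue map on very good matrices with the linear rescaling that sends the driving coordinates $b=(b_{kh})_{k\le h}$ to the matrix $B$. Concretely, I would introduce the linear map $\iota:\mathbb{R}^{d(d+1)/2}\to\mathcal{H}_d$ defined in coordinates by $(\iota(b))_{kh}=b_{kh}$ for $k<h$ and $(\iota(b))_{kk}=\sqrt2\,b_{kk}$, so that $B(t)=\iota(b(t))$ by the very definition of the symmetric matrix fractional Brownian motion. Writing $\Phi=(\Phi^{(1)},\dots,\Phi^{(d)})$ for the vector-valued eigenvalue map on $\mathcal{H}_d^{vg}$ introduced above, I set $\Phi_i:=\Phi^{(i)}\circ\iota$ and $G:=\iota^{-1}(\mathcal{H}_d^{vg})$.

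The structural claims are then immediate. Since $\iota$ is a linear isomorphism (its coordinate representation is diagonal with nonzero entries $1$ and $\sqrt2$), it is a diffeomorphism of $\mathbb{R}^{d(d+1)/2}$ onto $\mathcal{H}_d$; hence $G$ is open, $\Phi_i\in C^{\infty}(G)$ because $\Phi^{(i)}$ is smooth on $\mathcal{H}_d^{vg}$, and $|G^c|=0$ because a linear isomorphism carries Lebesgue-null sets to Lebesgue-null sets and we already know $|(\mathcal{H}_d^{vg})^c|=0$. For the identity $\lambda_i(t)=\Phi_i(b(t))$ it suffices that $B(t)\in\mathcal{H}_d^{vg}$ almost surely for each fixed $t$; this holds because, as recalled above, $B(t)$ is a Gaussian orthogonal ensemble for $t>0$, so it has a density on $\mathcal{H}_d$ and the event $\{B(t)\in(\mathcal{H}_d^{vg})^c\}$ has probability zero.

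It remains to compute the derivatives, and this is where the scaling factors enter. Since $\iota$ is linear and acts coordinate-by-coordinate, the chain rule reads $\partial\Phi_i/\partial b_{kh}=(\partial\lambda_i/\partial x_{kh})\,(\partial x_{kh}/\partial b_{kh})$, where $\partial x_{kh}/\partial b_{kh}=1$ if $k<h$ and $=\sqrt2$ if $k=h$; substituting the first-order formula $\partial\lambda_i/\partial x_{kh}=2U_{ik}U_{ih}\mathbf{1}_{\{k\neq h\}}+U_{ik}^{2}\mathbf{1}_{\{k=h\}}$ recorded just before the lemma yields (\ref{tt1}). For the pure second derivative, because each $x_{kh}$ depends on the single variable $b_{kh}$ in an affine way, the second-order chain rule has no first-derivative remainder and collapses to $\partial^{2}\Phi_i/\partial b_{kh}^{2}=(\partial^{2}\lambda_i/\partial x_{kh}^{2})\,(\partial x_{kh}/\partial b_{kh})^{2}$, so the multiplicative factor is $1$ off the diagonal and $2$ on the diagonal. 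Inserting the second-order formula for $\partial^{2}\lambda_i/\partial x_{kh}^{2}$ therefore preserves the coefficient $2$ in the off-diagonal sum and turns the diagonal coefficient $2$ into $4$, which is exactly the asserted expression.

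There is no genuine obstacle here beyond careful bookkeeping; the only point that requires attention is to keep the two coordinate systems distinct, namely $x_{kh}$ for the matrix entries versus $b_{kh}$ for the driving processes, and to track the factor $\sqrt2$ on the diagonal through both the first derivative and the squared second derivative. This tracking is precisely what accounts for the discrepancy between the coefficients in the statement of Lemma \ref{LemSmooth} and the eigenvalue derivative formulas quoted immediately before it.
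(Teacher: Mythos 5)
Your proposal is correct and follows exactly the route the paper intends: the paper states the lemma ``as a consequence of the previous discussion,'' i.e.\ composing the smooth eigenvalue map on $\mathcal{H}_d^{vg}$ with the linear rescaling $b\mapsto B$ and applying the chain rule to track the diagonal factor $\sqrt{2}$ (whence the coefficients $\sqrt{2}$ and $4$). Your write-up merely makes explicit the details the paper leaves implicit, so there is nothing to correct.
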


On the other hand, the joint density of the eigenvalues $\lambda_{1}%
(t)>\cdots>\lambda_{d}(t)$ of $B(t)$ can be obtained from \cite[Th.
2.5.2]{AGZ09} as follows%

\begin{equation}
c_{d}\prod_{k<h}\left\vert \lambda_{k}-\lambda_{h}\right\vert t^{-Hd(d+1)/2}%
\exp\left(  -\sum_{i=1}^{d}\frac{\lambda_{i}^{2}}{4t^{2H}}\right)  ,
\label{defbm}%
\end{equation}
where $c_{d}$ is a constant depending only on $d.$

Then we can prove the following analogous of (\ref{21}) for the evolving of
the eigenvalues processes of a matrix fractional Brownian motion. It is given
in terms of the Skorohod integral of the functions $\frac{\partial\Phi_{i}%
}{\partial b_{kh}},i=1,...,d.$

\begin{theorem}
\label{thm4.1} Let $H\in(1/2,1)$ and $\left\{  B(t),t\geq0\right\}  $ be a
matrix fractional Brownian motion of parameter $H$ as above. Let $X(0)$ be an
arbitrary deterministic symmetric matrix and $B(0)=X(0).$ For each $t\geq0,$
let $\lambda_{1}(t),...,\lambda_{d}(t)$ be the eigenvalues of $B(t).$ Then,
for any $t>0$ and $i=1,\dots,d$,
\begin{equation}
\lambda_{i}(t)=\lambda_{i}(0)+Y_{t}^{i}+2H\sum_{j\not =i}\int_{0}^{t}%
\frac{s^{2H-1}}{\lambda_{i}(s)-\lambda_{j}(s)}ds, \label{31}%
\end{equation}
where
\begin{equation}
Y_{t}^{i}=\sum_{k\leq h}\int_{0}^{t}\frac{\partial\Phi_{i}}{\partial b_{kh}%
}(b(s))\delta b_{kh}(s). \label{32}%
\end{equation}

\end{theorem}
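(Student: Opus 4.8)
The plan is to obtain \eqref{31} by applying the generalized It\^o formula of Theorem \ref{Th41} to each function $\Phi_i$, viewed as a function on $\mathbb{R}^{d(d+1)/2}$ of the vector of independent fractional Brownian motions $b=(b_{kh})_{k\le h}$. By Lemma \ref{LemSmooth}, $\Phi_i$ is $C^\infty$ on the open set $G$ whose complement has zero Lebesgue measure, and we have explicit formulas for the first and second partial derivatives in terms of the orthogonal matrix $U$. The dimension $N$ in Theorem \ref{Th41} is here $d(d+1)/2$, and the components $B^{(i)}$ are indexed by the pairs $(k,h)$ with $k\le h$. Granting that the three hypotheses of Theorem \ref{Th41} hold for $\Phi_i$, the formula \eqref{eq1} gives
\[
\Phi_i(b(t))=\Phi_i(b(0))+\sum_{k\le h}\int_0^t \frac{\partial\Phi_i}{\partial b_{kh}}(b(s))\,\delta b_{kh}(s)+H\sum_{k\le h}\int_0^t \frac{\partial^2\Phi_i}{\partial b_{kh}^2}(b(s))\,s^{2H-1}\,ds,
\]
which identifies the first two terms with $\lambda_i(0)+Y_t^i$ as defined in \eqref{32}.

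The remaining task is to reduce the drift, i.e.\ to show that
\[
H\sum_{k\le h}\frac{\partial^2\Phi_i}{\partial b_{kh}^2}=2H\sum_{j\ne i}\frac{1}{\lambda_i-\lambda_j}.
\]
Using the second-derivative formula from Lemma \ref{LemSmooth}, this amounts to the purely algebraic identity
\[
\sum_{k<h}\frac{|U_{ik}U_{jh}+U_{ih}U_{jk}|^2}{\lambda_i-\lambda_j}+2\sum_{k}\frac{|U_{ik}U_{jk}|^2}{\lambda_i-\lambda_j}=\frac{1}{\lambda_i-\lambda_j},\qquad j\ne i,
\]
after summing over $j\ne i$; more precisely, for each fixed $j\ne i$ the sum over $k\le h$ of the numerators should equal $1$. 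I would verify this by expanding: writing $S:=\sum_{k,h}(U_{ik}U_{jh})(U_{ih}U_{jk})$ and related sums, the double sum $\sum_{k\le h}|U_{ik}U_{jh}+U_{ih}U_{jk}|^2$ together with the diagonal contribution collapses, upon using the orthogonality relations $\sum_k U_{ik}U_{jk}=\delta_{ij}$ (so $\sum_k U_{ik}^2=1$ and $\sum_k U_{ik}U_{jk}=0$ for $i\ne j$), to $\big(\sum_k U_{ik}^2\big)\big(\sum_h U_{jh}^2\big)+\big(\sum_k U_{ik}U_{jk}\big)^2=1+0=1$. This is the combinatorial heart of the reduction and mirrors exactly the computation that produces the repulsion drift in the classical Dyson case \eqref{21}.

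The main obstacle is verifying hypotheses (2) and (3) of Theorem \ref{Th41} for $\Phi_i$, since these involve the non-smooth behavior of the eigenvalues near the set where they collide. Hypothesis (2), the polynomial growth of $\Phi_i=\lambda_i$ and of $\partial\Phi_i/\partial b_{kh}$, is immediate: the eigenvalues are globally Lipschitz in the matrix entries by the Hoffman--Wielandt inequality, and the first derivatives are bounded by $|U_{ik}U_{ih}|\le 1$ in absolute value since $U$ is orthogonal. Hypothesis (3), the bound $\mathbb{E}\big[|\partial^2\Phi_i/\partial b_{kh}^2(b(s))|^p\big]\le Cs^{-pH}$, is the delicate point, because the second derivative blows up like $1/(\lambda_i-\lambda_j)$ near collisions. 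To control it I would use the explicit joint density \eqref{defbm} of the eigenvalues of $B(s)$: the Vandermonde factor $\prod_{k<h}|\lambda_k-\lambda_h|$ in the numerator compensates the inverse spacing, so that $\mathbb{E}\big[|\lambda_i(s)-\lambda_j(s)|^{-p}\big]$ is finite and scales correctly. Performing the change of variables $\lambda=s^{H}\mu$ (the $H$-self-similarity of the matrix fBm entries forces this scaling in \eqref{defbm}) extracts exactly the factor $s^{-pH}$, giving hypothesis (3); this is the analogue, in the fractional setting, of the moment estimate \eqref{4.1}--\eqref{eEOrht} carried out in the proof of Theorem \ref{ThNC}. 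Finally, since Theorem \ref{ThNC} guarantees that the eigenvalues never collide for $t>0$, the functions $\Phi_i$ are evaluated on $G$ for every $t>0$ almost surely, so \eqref{eq1} is legitimately applicable and \eqref{31} follows.
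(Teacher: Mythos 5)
Your proposal follows essentially the same route as the paper's proof: apply Theorem \ref{Th41} to $\Phi_i$ with $N=d(d+1)/2$, get hypothesis (2) from the boundedness of $\partial\Phi_i/\partial b_{kh}$ and hypothesis (3) from the eigenvalue density \eqref{defbm} via the change of variables $\lambda=s^{H}\mu$, then collapse the drift to $2\sum_{j\neq i}(\lambda_i-\lambda_j)^{-1}$ using the orthogonality of $U$ --- the paper does exactly this, merely stating the algebraic identity that you expand in detail. The one caveat (shared with, and inherited from, the paper) is that $\mathbb{E}\bigl[|\lambda_i(s)-\lambda_j(s)|^{-p}\bigr]$ is finite only for $p<2$ because the Vandermonde factor appears to the first power, so hypothesis (3) as literally stated (all $p\ge 1$) fails; this is harmless since the proof of Theorem \ref{Th41} only uses $p=1$ and $p=1/H<2$.
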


\begin{proof}
Without loss of generality we can consider $X(0)=0$. Consider the function
$\Phi:\mathbb{R}^{d(d+1)/2} \rightarrow\mathbb{R}^{d}$ introduced in Lemma
\ref{LemSmooth}, which is $C^{\infty}$ in an open subset $G\subset
\mathbb{R}^{d(d+1)/2}$ whose complement has zero Lebesgue measure. We claim
that this function satisfies the assumptions of Theorem \ref{Th41}. First
notice that
\[
\sum_{i=1}^{d}\Phi_{i}^{2}\leq\frac{1}{d}\sum_{i=1}^{d}b_{ij}^{2}+\frac{1}%
{2}d\sum_{i<j}^{d}b_{ij}^{2},
\]
and
\[
\left\vert \frac{\partial\Phi_{i}}{\partial b_{kh}}\right\vert \leq2+\sqrt
{2}.
\]
On the other hand, taking into account that $\lambda_{i}(t)=\Phi_{i}(b(t))$
and using the density of the eigenvalues (\ref{defbm}), we can write
\begin{align}
\mathbb{E}\left[  \left\vert \frac{\partial^{2}\Phi_{i}}{\partial b_{kh}^{2}%
}(b_{s})\right\vert ^{p}\right]   &  \leq C_{p}\sum_{j\not =i}\mathbb{E}%
[|\lambda_{i}(s)-\lambda_{j}(s)|^{-p}]\nonumber\\
&  =C_{p}\sum_{j\not =i}\int_{\mathcal{S}_{d}}\prod_{k<h}(\lambda_{k}%
-\lambda_{h})|\lambda_{i}-\lambda_{j}|^{-p}s^{-\frac{Hd(d+1)}{2}}\exp\left(
-\sum_{i=1}^{d}\frac{\lambda_{i}^{2}}{4s^{2H}}\right)  d\lambda\nonumber\\
&  \leq C_{p}s^{-pH}, \label{Est}%
\end{align}
where we have made the change of variable $\lambda_{i}=s^{H}\mu_{i}$.

Therefore, conditions (1), (2) and (3) if Theorem \ref{Th41} hold for $\Phi$.
Notice also that
\[
\sum_{k\leq h}\frac{\partial^{2}\Phi_{i}}{\partial b_{kh}^{2}}=2\sum
_{j\not =i}\frac{1}{\lambda_{i}-\lambda_{j}}.
\]

Then, Theorem \ref{Th41} yields
\[
\lambda_{i}(t)=Y_{t}^{i}+2H\sum_{j\not =i}\int_{0}^{t}\frac{s^{2H-1}}%
{\lambda_{i}(s)-\lambda_{j}(s)}ds,
\]
where $Y_{t}^{i}$ is given by (\ref{32}). We remark that by Theorem \ref{ThNC}
$(\lambda_{1}(t), \dots,\lambda_{d}(t))$ belongs to the open simplex
$\mathcal{S}_{d}$ for each $t>0$ with probability one.
\end{proof}

\begin{remark}
a) Notice that equation (\ref{31}) is similar to the It\^{o} SDE (\ref{21})
satisfied for the eigenvalues of the matrix Brownian motion, in the case
$H=1/2$. A natural question is to ask whether the processes $Y^{i}$ are
one-dimensional independent fractional Brownian motions in the general case
$H>1/2$. In the case $H=1/2$ this is obtained applying L\'{e}vy's
characterization theorem. Unfortunately, although there is a version of this
theorem for the fractional Brownian motion (see \cite{HNS09}), it cannot be
used here due to the lack of martingale properties.

b) We will show below that for each $i=1,\dots,d$, the process $Y^{i}$ has the
same self-similar and variation properties as the fractional Brownian motion.
However, from these properties we cannot conclude that $Y^{i}$ is a fractional
Brownian motion because we do not know if these processes are Gaussian. We
conjecture that these processes are not Gaussian (see \cite{HN05} for a
related problem concerning the fractional Bessel process, where again this
type of non Gaussian process appear).
\end{remark}

\begin{proposition}
Assuming $B(0)=0$, the process $Y=(Y^{1}, \dots, Y^{d})$ is $H$-self-similar.
\end{proposition}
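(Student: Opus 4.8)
The plan is to fix $a>0$ and prove that the $d$-dimensional process $\{Y_{at}\}_{t\ge 0}$ has the same law as $\{a^H Y_t\}_{t\ge 0}$. Two facts drive the argument: the $H$-self-similarity of the driving matrix fractional Brownian motion, and the scale invariance of the integrands $\frac{\partial \Phi_i}{\partial b_{kh}}$. For the latter, first I would observe that since the eigenvalues scale linearly, $\Phi_i(cx)=c\,\Phi_i(x)$ for every $c>0$, so each $\Phi_i$ is positively homogeneous of degree one and hence $\frac{\partial \Phi_i}{\partial b_{kh}}$ is positively homogeneous of degree zero; this is also transparent from (\ref{tt1}), where the coefficients depend only on the orthogonal eigenvector matrix $U$, which is unchanged when the matrix is multiplied by a positive scalar.

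Next I would set $\tilde b_{kh}(r):=a^{-H}b_{kh}(ar)$. By the self-similarity and independence of the component fractional Brownian motions, $\{\tilde b_{kh}\}$ is again a family of independent fractional Brownian motions of the same law as $\{b_{kh}\}$. The core step is the scaling rule for the divergence integral,
\[
\int_0^{at}\frac{\partial \Phi_i}{\partial b_{kh}}(b(s))\,\delta b_{kh}(s)=a^H\int_0^t \frac{\partial \Phi_i}{\partial b_{kh}}(b(ar))\,\delta \tilde b_{kh}(r),
\]
where the right-hand integral is a Skorohod integral with respect to $\tilde b_{kh}$. I would prove this first for simple integrands $F\,\mathbf 1_{(c,d]}$ with $F$ smooth and cylindrical, using $\delta(Fh)=F\delta(h)-\langle DF,h\rangle_{\mathcal H}$ together with the covariance identity $R(at,as)=a^{2H}R(t,s)$, and then pass to the limit via the continuity estimate (\ref{estimatedelta}) for $\delta$ on $\mathbb{L}_{H,kh}^{1,1/H}$, checking that the time-scaled integrand stays in that space (which follows from conditions (2)--(3) already verified for $\Phi$ in the proof of Theorem \ref{thm4.1}). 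Invoking the degree-zero homogeneity gives $\frac{\partial \Phi_i}{\partial b_{kh}}(b(ar))=\frac{\partial \Phi_i}{\partial b_{kh}}(a^H\tilde b(r))=\frac{\partial \Phi_i}{\partial b_{kh}}(\tilde b(r))$, whence
\[
Y_{at}^i=a^H\sum_{k\le h}\int_0^t \frac{\partial \Phi_i}{\partial b_{kh}}(\tilde b(r))\,\delta \tilde b_{kh}(r).
\]
The right-hand sum is the very same measurable functional of $\tilde b$ that defines $Y^i$ in terms of $b$; since $\tilde b$ and $b$ have the same law as processes, the $d$-dimensional process it defines has the same law as $\{Y_t\}_{t\ge 0}$, and therefore $\{Y_{at}\}_{t\ge 0}$ and $\{a^H Y_t\}_{t\ge 0}$ have the same law.

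The hard part will be the rigorous justification of the Skorohod scaling rule, because the divergence integral is not a pathwise object and the identity must be understood as an equality in $L^2(\Omega)$: the delicate points are controlling the domain membership of the time-scaled integrand and commuting the scaling transformation with the $L^p$-limits that define $\delta$. By contrast, the homogeneity and self-similarity inputs are routine.
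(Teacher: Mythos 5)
Your proof is correct, but it takes a genuinely different route from the paper's. The paper's argument is a two-line reduction: by Theorem \ref{thm4.1} (with $\lambda_i(0)=0$), $Y^i$ has the pathwise representation
\[
Y_{t}^{i}=\lambda_{i}(t)-2H\sum_{j\not=i}\int_{0}^{t}\frac{s^{2H-1}}{\lambda_{i}(s)-\lambda_{j}(s)}\,ds,
\]
so $Y$ is a deterministic functional of the eigenvalue path; since $\{\lambda(at)\}$ and $\{a^{H}\lambda(t)\}$ have the same law (eigenvalues scale linearly and the matrix fBm is $H$-self-similar), the change of variables $s=ar$ makes the drift term scale like $a^{2H}\cdot a^{-H}=a^{H}$, matching the scaling of $\lambda_i(at)$, and the conclusion follows with no further stochastic calculus. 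You instead work directly from the Skorohod representation (\ref{32}), which forces you to establish a scaling covariance for the divergence operator; that lemma is true, your outline (simple integrands, the identity $R(at,as)=a^{2H}R(t,s)$, then closedness/continuity of $\delta$) is workable, and it can even be streamlined: the time scaling induces an isometry $\Lambda_{a}$ between the Hilbert spaces of $\tilde b$ and $b$, $\mathbf{1}_{[0,r]}\mapsto a^{-H}\mathbf{1}_{[0,ar]}$, which satisfies $\tilde{B}(\psi)=B(\Lambda_{a}\psi)$ and $DF=\Lambda_{a}\tilde{D}F$ on cylindrical functionals, whence $\delta^{\tilde b}(u)=\delta^{b}(\Lambda_{a}u)$ with equality of domains by pure duality (\ref{dua}); your scaling rule is exactly the case $\Lambda_{a}u=a^{-H}v$. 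Your degree-zero homogeneity observation is also correct and is transparent from (\ref{tt1}). What the paper's route buys is brevity: all Malliavin-calculus work is already encapsulated in Theorem \ref{thm4.1}, and only elementary pathwise manipulations remain. What your route buys is independence from the drift identity (\ref{31}) — you use only the definition (\ref{32}) plus homogeneity — and, as a by-product, a reusable scaling lemma for Skorohod integrals of scale-invariant integrands.
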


\begin{proof}
Let $a>0$. By the self-similarity of the fractional Brownian motion it follows
that $\{\lambda(at),t\geq0\}$ has the same law as $\{a^{H}\lambda(t),t\geq
0\}$. Then, the result follows from the equation
\[
Y_{t}^{i}=\lambda_{i}(t)-2H\sum_{j\not =i}\int_{0}^{t}\frac{s^{2H-1}}%
{\lambda_{i}(s)-\lambda_{j}(s)}ds,
\]
for $i=1,\dots,d$.
\end{proof}

Finally, as an application of Theorem \ref{Theorem multidimensional}, we show
that for each $i=1,\dots, d$, the process $Y^{i}$ has the same $1/H$ variation
as a fBm with variance $2t^{2H}$.

\begin{proposition}
For each $i=1,\dots, d$, the process $Y^{i}$ has a $1/H$ variation equals to
$\sqrt{2} t \mathbb{E}\left[  |Z| ^{1/H}\right]  $, where $Z$ is $N(0,1)$
random variable.
\end{proposition}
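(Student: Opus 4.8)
The plan is to recognize $Y^{i}$ as an instance of the process $X$ in Theorem~\ref{Theorem multidimensional}, apply that theorem, and then evaluate the resulting double integral explicitly. Writing $N=d(d+1)/2$ and viewing $\{b_{kh}:k\le h\}$ as an $N$-dimensional fractional Brownian motion, formula~\eqref{32} expresses
\[
Y_{t}^{i}=\sum_{k\le h}\int_{0}^{t}u_{s}^{(kh)}\,\delta b_{kh}(s),\qquad u_{s}^{(kh)}=\frac{\partial\Phi_{i}}{\partial b_{kh}}(b(s)).
\]
The proof of Theorem~\ref{thm4.1} already verified that $\Phi_{i}$ meets the hypotheses of Theorem~\ref{Th41}, and in particular that each $u^{(kh)}$ belongs to $\mathbb{L}_{H,(kh)}^{1,1/H}$; this is exactly the integrability required to invoke Theorem~\ref{Theorem multidimensional}.

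Applying that theorem with $\mathbb{R}^{N}$-valued integrand $u_{s}=(u_{s}^{(kh)})_{k\le h}$ and $\nu$ the standard Gaussian law on $\mathbb{R}^{N}$, I would obtain
\[
V_{n}^{1/H}(Y^{i})\xrightarrow[n\to\infty]{L^{1}(\Omega)}\int_{\mathbb{R}^{N}}\Big[\int_{0}^{t}|\langle u_{s},\xi\rangle|^{1/H}\,ds\Big]\,\nu(d\xi).
\]
Since the integrand is nonnegative I can exchange the order of integration by Fubini. For fixed $\omega$ and $s$, the map $\xi\mapsto\langle u_{s},\xi\rangle$ is linear, so under $\nu$ it is a centered Gaussian variable with variance $\|u_{s}\|^{2}$; hence $\int_{\mathbb{R}^{N}}|\langle u_{s},\xi\rangle|^{1/H}\,\nu(d\xi)=\|u_{s}\|^{1/H}\,\mathbb{E}[|Z|^{1/H}]$ for $Z\sim N(0,1)$, and the limit becomes $\mathbb{E}[|Z|^{1/H}]\int_{0}^{t}\|u_{s}\|^{1/H}\,ds$.

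The heart of the matter is then the purely algebraic identity $\|u_{s}\|^{2}\equiv2$. Inserting the derivatives~\eqref{tt1} from Lemma~\ref{LemSmooth},
\[
\|u_{s}\|^{2}=\sum_{k\le h}\Big(\frac{\partial\Phi_{i}}{\partial b_{kh}}\Big)^{2}=4\sum_{k<h}U_{ik}^{2}U_{ih}^{2}+2\sum_{k}U_{ik}^{4}=2\Big(\sum_{k}U_{ik}^{2}\Big)^{2}=2,
\]
where the last step uses that $U$ is orthogonal, so its $i$th row is a unit vector, $\sum_{k}U_{ik}^{2}=1$. Because $\|u_{s}\|\equiv\sqrt{2}$ is constant in both $s$ and $\omega$, the $s$-integral collapses to a factor $t$, and the limit is $(\sqrt{2})^{1/H}\,t\,\mathbb{E}[|Z|^{1/H}]$, which is precisely the $1/H$-variation of a fractional Brownian motion of variance $2t^{2H}$, as anticipated in the discussion preceding the statement.

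I do not expect a genuine analytic obstacle here: the integrability $u^{(kh)}\in\mathbb{L}_{H,(kh)}^{1,1/H}$ was secured already in the proof of Theorem~\ref{thm4.1}, and the entire content of the statement resides in the cancellation $4\sum_{k<h}U_{ik}^{2}U_{ih}^{2}+2\sum_{k}U_{ik}^{4}=2(\sum_{k}U_{ik}^{2})^{2}$. The one point worth emphasizing is that this norm is constant \emph{pointwise}, not merely in expectation; it is exactly this pointwise constancy that forces the $1/H$-variation to coincide with that of a scaled fractional Brownian motion rather than only agreeing in law at fixed times.
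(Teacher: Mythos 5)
Your proposal is correct and follows essentially the same route as the paper's own proof: invoke Theorem \ref{Theorem multidimensional}, observe that the integrand has \emph{pointwise} constant Euclidean norm, $\sum_{k\le h}\bigl|\frac{\partial\Phi_i}{\partial b_{kh}}\bigr|^{2}=2$ (the paper asserts this identity from (\ref{tt1}) without writing out the algebra you supply), and then evaluate the Gaussian functional by rotational invariance. The only cosmetic difference is in that last step: you integrate out $\xi$ directly, using that $\langle u_s,\xi\rangle$ is $N(0,\|u_s\|^{2})$ under $\nu$, whereas the paper writes $u_s=\sqrt{2}\,R_s^{i}$ with $R_s^{i}\in S^{N-1}$ and passes through the polar decomposition of the standard Gaussian; these are the same computation.

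One substantive point deserves emphasis: the constant you obtain, $(\sqrt{2})^{1/H}=2^{1/(2H)}$, is the correct one, and it is exactly what is consistent with the sentence preceding the proposition (``the same $1/H$ variation as a fBm with variance $2t^{2H}$''). The constant $\sqrt{2}$ in the statement of the proposition, and the factor $2$ appearing at the end of the paper's own proof, are slips: pulling $\|u_s\|=\sqrt{2}$ out of $|\cdot|^{1/H}$ yields $2^{1/(2H)}$, not $\sqrt{2}$ (which would require $H=1$) nor $2$ (which would require $H=1/2$). The paper's proof also writes $\mathbb{E}[|\Theta|]$ where the radial factor should be $\mathbb{E}\bigl[|\Theta|^{1/H}\bigr]$ for the recombination $\mathbb{E}\bigl[|\Theta|^{1/H}\bigr]\int_{S^{N-1}}|\eta_1|^{1/H}\sigma_N(d\eta)=\mathbb{E}\bigl[|Z|^{1/H}\bigr]$ to go through. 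So your argument in fact establishes the corrected form of the statement, with limit $2^{1/(2H)}\,t\,\mathbb{E}\bigl[|Z|^{1/H}\bigr]$.
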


\begin{proof}
By Theorem \ref{thm4.1}, we have that for each $k,h$ the process $\left\{
\frac{\partial\Phi_{i}}{\partial b_{kh}}(b(s)),s\in\lbrack0,T]\right\}  $
belongs to the space $\mathbb{L}_{H,i}^{1,1/H}$ for each $i=1,\ldots
,N=\frac{d(d+1)}{2}$. Therefore, by Theorem \ref{Theorem multidimensional} the
$1/H$-variation of $Y^{i}$ in the time interval $[0,t]$ is given by
\[
\mathbb{E}^{\Theta}\left[  \int_{0}^{t}\left\vert \sum_{k\leq h}\frac
{\partial\Phi_{i}}{\partial b_{kh}}(b(s))\Theta_{kh}\right\vert ^{1/H}%
ds\right]  ,
\]
where $\Theta$ is an $N$-dimensional standard normal random variable. Let us
denote by $S^{N}$ the unitary $N$-dimensional sphere and let $\sigma_{N}$ be
the uniform probability measure defined on $S^{N-1}$. Notice that the vector
$\frac{1}{\sqrt{2}}\left(  \frac{\partial\Phi_{i}}{\partial b_{kh}%
}(b(s))\right)  _{k\leq h}$, denoted by $R_{s}^{i},$ takes values in $S^{N-1}$
because from (\ref{tt1}) we obtain
\[
\sum_{k\leq h}\left\vert \frac{\partial\Phi_{i}}{\partial b_{kh}%
}(b(s))\right\vert ^{2}=2.
\]
Therefore,
\[
2\mathbb{E}^{\Theta}\left[  \int_{0}^{t}\left\vert \langle R_{s}^{i}%
,\Theta\rangle\right\vert ^{1/H}ds\right]  =2\mathbb{E}[|\Theta|]\int%
_{S^{d-1}}\left[  \int_{0}^{T}\left\vert \left\langle R_{s}^{i},\eta
\right\rangle \right\vert ^{1/H}\mathrm{d}s\right]  \sigma_{N}\left(
d\eta\right)  .
\]
Moreover, if $e\in S^{d-1}$, the integral $\int_{S^{N-1}}\left\vert
\left\langle e,\eta\right\rangle \right\vert ^{1/H}\sigma_{N}\left(
d\eta\right)  $ does not depend on the vector $e$. Therefore, choosing
$e=\left(  1,0,\ldots,0\right)  $, yields
\[
2\mathbb{E}^{\Theta}\left[  \int_{0}^{t}\left\vert \langle R_{s}^{i}%
,\Theta\rangle\right\vert ^{1/H}ds\right]  =2t\mathbb{E}[|\Theta
|]\int_{S^{N-1}}\left\vert \eta_{1}\right\vert ^{1/H}\sigma_{N}\left(
d\eta\right)  \newline=2t\mathbb{E}\left[  \left\vert Z\right\vert
^{1/H}\right]  ,
\]
where $Z$ is a one-dimensional $N(0,1)$ random variable. This completes the
proof of the proposition.
\end{proof}

\begin{remark}
Consider the Hermitian matrix fractional Brownian motion $B(t)=$ ($B_{ij}(t))$
where
\[
B_{ij}=%
\begin{cases}
\frac{1}{\sqrt{2}}\mathrm{Re}(b_{ij}(t))+\sqrt{-1}\mathit{\mathrm{Im}}%
(b_{ij}(t)) & \mbox{if}\,\,i\neq j,\\
b_{i,i}(t), & \mbox{if}\,\,i=j,
\end{cases}
\]
where $\left\{  \mathrm{Re}b_{ij}(t)),\mathrm{Im}(b_{ij}(t)),i<j,b_{i,i}%
(t)\right\}  $ is a family of independent fractional Brownian motions with
Hurst parameter $H\in(1/2,1).$ We identify $B(t)$ as an element in
$\mathbb{R}^{d^{2}}$.

a) In this case one can prove that if $X(0)$ is an arbitrary deterministic
Hermitian matrix and $B(0)=X(0)$ and for each $t\geq0,$ $\lambda_{1}%
(t),\dots,\lambda_{d}(t)$ denote the eigenvalues of $B(t)$, then, for any
$t>0$ and $i=1,\dots,d$,
\begin{equation}
\lambda_{i}(t)=\lambda_{i}(0)+Y_{t}^{i}+2H\sum_{j\not =i}\int_{0}^{t}%
\frac{s^{2H-1}}{\lambda_{i}(s)-\lambda_{j}(s)}ds, \label{RU1}%
\end{equation}
where
\begin{equation}
Y_{t}^{i}=\sum_{k\leq h}\int_{0}^{t}\frac{\partial \Upsilon_{i}}{\partial
b_{kh}}(b(s))\delta b_{kh}(s). \label{RU2}%
\end{equation}
The process $Y_{t}^{i}$ \ given by (\ref{RU2}) is such that for each
$i=1,\dots,d$, $\Upsilon_{i}$ is a smooth function of $X\in\mathcal{H}%
_{d}^{(2)vg}$, the set of very good Hermitian matrices consisting of matrices
with a decomposition $X=UDU^{\ast},$ where $D$ is a diagonal matrix with
entries $\lambda_{i}=D_{ii}$ such that $\lambda_{1}>\lambda_{2}>\cdots
>\lambda_{d}$, $U$ is an unitary matrix, with $U_{ii}>0$ for all $i$,
$U_{ij}\not =0$ for all $i,j$ and all minors of $U$ have non zero determinants
($U\in\mathcal{U}_{d}^{(2)vg}$). This follows from Lemma 2.5.6 in \cite{AGZ09}
which gives that the mapping $T^{(2)}:\mathcal{U}_{d}^{(2)vg}\rightarrow
\mathbb{R}^{d(d-1)}$ defined by
\[
T^{(2)}(U)=\left(  \frac{U_{12}}{U_{11}},\dots,\frac{U_{1d}}{U_{11}}%
,\frac{U_{2,3}}{U_{22}},\dots,\frac{U_{2,d}}{U_{2,2}},...\frac{U_{d-1,d}%
}{U_{d-1,d-1}}\right)  .
\]
is bijective and smooth and the complement of $\mathcal{H}_{d}^{(2)vg}$ has
Lebesgue measure zero. Therefore the mapping $\hat{T}^{(2)}:\mathcal{S}%
_{d}\times T^{(2)}(\mathcal{U}_{d}^{(2)vg})\rightarrow\mathcal{H}_{d}^{(2)vg}$
given by $\hat{T}^{(2)}(\lambda,z)=\left(  T^{(2)}\right)  ^{-1}(z)D_{\lambda
}\left(  T^{(2)}\right)  ^{-1}(z)^{\ast}$ is a smooth bijection. As a
consequence, $\lambda(X)=\Upsilon(X)$ is a smooth function of $X\in
\mathcal{H}_{d}^{(2)vg}$.

b) The proof of (\ref{RU1}), analogous to that of Theorem \ref{thm4.1},
requires to consider estimates like (\ref{Est}) but now using the joint
density of the eigenvalues $(\lambda_{1}(t),\dots,\lambda_{d}(t))$ in the
Hermitian case (from (\ref{EinHer})) given by
\[
\widetilde{c_{d}}\prod_{k<h}\left\vert \lambda_{k}-\lambda_{h}\right\vert
^{2}t^{-Hd^{2}}\exp\left(  -\sum_{i=1}^{d}\frac{\lambda_{i}^{2}}{2t^{2H}%
}\right)  ,
\]
where $\widetilde{c_{d}}$ is a constant that depends only on $d.$
\end{remark}

\textbf{Acknowledgment. }The authors would like to thank the referee for
useful comments and suggestions that improved the presentation of the paper.


\begin{thebibliography}{99}                                                                                               %


\bibitem {AGZ09}Anderson, W., Guionnet, A., Zeitouni O.: \emph{An Introduction
to\ Random Matrices. }Cambridge University Press, 2009.

\bibitem {Be08}Bender, M.: Global fluctuations in general $\beta\ \ $Dyson
Brownian motion, \emph{Stoch. Proc. Appl.}\textit{\ }\textbf{118}, (2008), 1022-1042.

\bibitem {BiHuOkZh08}Biagini, F., Hu, Y., \O ksendal, B., Zhang, T.:
\emph{Stochastic Calculus for Fractional Brownian Motion and Applications}.
Springer-Verlag, 2008.

\bibitem {Br89}Bru, M.F.: Diffusions of perturbed principal component
analysis,\textit{\ }\emph{J. Multivariate Anal.}\textit{\ }\textbf{29} (1989), 127-136.

\bibitem {Br91}Bru, M.F.: Wishart processes,\textit{\ \emph{J. Theoret.
Probab.} }\textbf{9 } (1991), 725-751

\bibitem {CL97}C\'{e}pa, E., Lepingle D.: Diffusing particles with
electrostatic repulsion, \emph{Probab. Theory Relat. Fields }\textbf{107}
(1997), 429-449.

\bibitem {Ch92}Chan, T.: The Wigner semicircle law and eigenvalues of matrix
diffusions, \emph{Probab. Theory Relat. Fields}\textit{\ }\textbf{93} (1992), 249-272.

\bibitem {CheNu05}Cheridito, P., Nualart, D.: Stochastic integral of
divergence type with respect to fractional Brownian motion with Hurst
parameter $H\in(0,1/2)$, \emph{Ann. Inst. Henri Poincar\'{e}} \textbf{41}
(2005), 1049-1081.

\bibitem {De07}Demni, N.: The Laguerre process and generalized Hartman-Watson
law, \emph{Bernoulli}\textbf{\ 13} (2007), 556-580.

\bibitem {De07b}Demni, N.: \emph{Processus Stochastiques Matriciels, Systemes
de Racines et Probabilit\'{e}s Non Commutatives.}\textit{\ }Thesis,
Universit\'{e} Pierre et Marie Curie, 2007.

\bibitem {Dy62}Dyson, F.J.: A Brownian-motion model for the eigenvalues of a
random matrix, \emph{J. Math. Phys.}\textit{\ }\textbf{3} (1962).

\bibitem {Ga99}Grabiner, D. J.: Brownian motion in a Weyl chamber,
non-colliding particles, and random matrices, \emph{Ann. Inst. Henri
Poincar\'{e}, Probab. Statist}. \textbf{35}, (1999), 177-204.

\bibitem {GN05}Guerra, J., Nualart, D.: The 1/H-variation of the divergence
integral with respect to the fractional Brownian motion for $H>1/2$ and
fractional Bessel processes, \emph{Stoch. Process. Appl.} \textbf{115} (2005), 91--115.

\bibitem {Gu08}Guionnet, A.: \emph{Random Matrices: Lectures on Macroscopic
Asymptotics}\textit{.} \'{E}cole d\`{E}t\'{e} des Probabilit\'{e}s de
Saint-Flour XXXVI 2006, (Lecture Notes in Mathematics), Springer, 2008.

\bibitem {HoW53}Hoffman, A. J., Wielandt, H. W.: The variation of the spectrum
of a normal matrix, \emph{Duke Math. J.} \textbf{20} (1953), 37-39.

\bibitem {HN05}Hu, Y., Nualart, D.: Some processes associated with fractional
Bessel processes, \emph{J. Theoret. Probab.} \textbf{18} (2005), 377-307.

\bibitem {HN09}Hu, Y., Nualart, D.: Rough path analysis via fractional
calculus, \emph{Trans. Amer. Math. Soc}. \textbf{361} (2009), 2689--2718.

\bibitem {HNS05}Hu, Y., Nualart, D.: Some processes associated with fractional
Bessel processes, \emph{J. Theoret. Probab.} \textbf{18} (2005), 377-307.

\bibitem {HNS09}Hu, Y., Nualart, D., Song, J.: Fractional martingales and
characterization of the fractional Brownian motion. \emph{Ann. Probab.}
\textbf{37} (2009), 2404-2430.

\bibitem {Is01}Israelson S.: Asymptotic fluctuations of a particle system with
singular interaction, \emph{Stoch. Process. Appl.}\textit{\ }\textbf{93}
(2001), 25-56.

\bibitem {KT04}Katori, M., Tanemura, K.: Symmetry of matrix stochastic
processes and noncolliding diffusion particle systems, \emph{J. Math. Phys.}
\textbf{45 }(2004), 3058-3085.

\bibitem {KT13}Katori, M., Tanemura, K.: Complex Brownian motion
representation of the Dyson model, \emph{Elect.\ Comm. Probab}\textit{.
}\textbf{18} (2013), 1-16.

\bibitem {KO01}Konig, W., O Connell, N.: Eigenvalues of the Laguerre process
as non-coliding squared Bessel processes, \emph{Elect.\ Comm. Probab}\textit{.
}\textbf{6} (2001), 107-114.

\bibitem {Mc69}McKean, H.P.: \emph{Stochastic Integrals, }Amer. Math.\ Soc. 1969.

\bibitem {Me04}Mehta, M.L.: \emph{Random Matrices,}\textit{\ }Third
Edition\textit{.} Academic Press, 2004.

\bibitem {MeMiVa01}Memin, J., Mishura, Y. Valkeila, E.: Inequalities for the
moments of Wiener integrals with respecto to fractional Brownian motions,
\emph{Statist. Prob. Letters} \textit{.} \textbf{55} (2001), 421-430.

\bibitem {Mi08}Mishura, Y.: \emph{Stochastic Calculus for Fractional Brownian
Motion and Related Processes}. Springer-Verlag, 2008

\bibitem {Nu06}Nualart, D.: \emph{The Malliavin calculus and related topics}.
Springer-Verlag, Berlin, 2nd edition, 2006.

\bibitem {PaT07}P\'{e}rez-Abreu, V., Tudor, C.: Functional limit theorems for
trace processes in a Dyson Brownian motion, \emph{Comm. Stoch. Anal.}%
\textit{\ }\textbf{1}(\textbf{3}) (2007), 415-428.

\bibitem {PaT09}P\'{e}rez-Abreu, V., Tudor, C.: On the traces of Laguerre
processes, \emph{Elect. J. Probab}\textit{. }\textbf{14} (2009), 2241--2263.

\bibitem {RS93}Rogers, L.C.G., Shi Z.: Interacting Brownian particles and the
Wigner law, \emph{Probab. Theory Relat. Fields}\textit{\ }\textbf{95 }(1993), 555-570.

\bibitem {Ta12}Tao,\ T.: \emph{Topics in Random Matrix Theory. }Amer. Math.
Soc., 2012.

\bibitem {TW04}Tracy, C.A., Widom, H.: Differential Equations for Dyson
Processes, \emph{Comm. Math. Phys}. \textbf{252} (2004), 7-41.

\bibitem {Yo36}Young, L. C.: An inequality of the H\"{o}lder type, connected
with Stieltjes integration, \emph{Acta Math.} \textbf{67} (1936), 251--28.

\bibitem {Za08}Z\"{a}hle, M.: Integration with respect to fractional functions
and stochastic calculus. I, \emph{Probab. Theory Relat. Fields} \textbf{111}
(1998) 333-374
\end{thebibliography}
\end{document}